\let\today\relax
\def\ps@pprintTitle{%
    \let\@oddhead\@empty
    \let\@evenhead\@empty
    \def\@oddfoot{\footnotesize\itshape
         {} \hfill\today}%
    \let\@evenfoot\@oddfoot
    }
\newtheorem{theorem}{Theorem}[section]
\newtheorem{definition}[theorem]{Definition}
\newtheorem{lemma}[theorem]{Lemma}
\newtheorem{assumption}[theorem]{Assumption}
\newtheorem{remark}[theorem]{Remark}
\begin{document}
\begin{frontmatter}
\title{Classification of sparse binary vectors}
\author[add1]{Evgenii Chzhen}
\ead{evgenii.chzhen@univ-paris-est.fr}

\address[add1]{LAMA, Universit\'e Paris-Est}

\begin{abstract}%   <- trailing '%' for backward compatibility of .sty file
In this work we consider a problem of multi-label classification, where each instance is associated with some binary vector.
Our focus is to find a classifier which minimizes false negative discoveries under constraints.
Depending on the considered set of constraints we propose plug-in methods and provide non-asymptotic analysis under margin type assumptions.
Specifically, we analyze two particular examples of constraints that promote sparse predictions: in the first one, we focus on classifiers with $\ell_0$-type constraints and in the second one, we address classifiers with bounded false positive discoveries.
Both formulations lead to different Bayes rules and, thus, different plug-in approaches.
% Recent empirical studies have shown that the plug-in approach performs particularly well in several contexts of multi-label classification.
% However, the theoretical study of these methods is usually limited to consistency results.
% In contrast, we provide a non-asymptotic analysis for such methods by establishing excess risk upper bounds.
The first considered scenario is the popular multi-label top-$K$ procedure: a label is predicted to be relevant if its score is among the $K$ largest ones.
For this case, we provide an excess risk bound that achieves so called ``fast'' rates of convergence under a generalization of the margin assumption to this settings.
The second scenario differs significantly from the top-$K$ settings, as the constraints are distribution dependent.
We demonstrate that in this scenario the almost sure control of false positive discoveries is impossible without extra assumptions.
To alleviate this issue we propose a sufficient condition for the consistent estimation and provide non-asymptotic upper bound.
\end{abstract}

\begin{keyword}
  Multi-label classification, plug-in rules, risk bounds, margin assumption
\end{keyword}

\end{frontmatter}
%%%%%%%%%%%%%%%%%%%%%%%%%%%%%%%%%%%%%%%%%%%%%%%%%%%%%%%%%%%%%%%%%%%%%%%%%%%%%%%
%%%%%%%%%%%%%%%%%%%%%%%%%%%%%%%%%%%%%%%%%%%%%%%%%%%%%%%%%%%%%%%%%%%%%%%%%%%%%%%
\section{Introduction}
\label{sec:introduction}
%%%%%%%%%%%%%%%%%%%%%%%%%%%%%%%%%%%%%%%%%%%%%%%%%%%%%%%%%%%%%%%%%%%%%%%%%%%%%%%
%%%%%%%%%%%%%%%%%%%%%%%%%%%%%%%%%%%%%%%%%%%%%%%%%%%%%%%%%%%%%%%%%%%%%%%%%%%%%%%
The goal of the multi-label classification is to annotate an observed object with a set of relevant labels.
Such a task encompasses a number of applications, for instance, text categorization~\citep{Gao_Wu_Lee_Chua04}, functional genomics~\citep{Barutcuoglu_Schapire_Troyanskaya06}, and image classification~\citep{li_Xim_Zhao_Feipeng_GUo_Yuhong14}.
Several sophisticated algorithms have been recently developed, including tree based algorithms~\citep{Jain_Prabhu_Varma16} and embedding based algorithms~\citep{Yu_Jain_Kar_Dhillon14, Bhatia_Jain_Kar_Varma_Jain15} which are considered to be state-of-the-art.
Other contributions have rather focused on efficient implementations of existing multi-label strategies: for instance in~\citep{Babbar_Scholkopf17} the authors developed a large-scale distributed framework relying on one-versus-rest strategy applied to linear classifiers, plug-in type classifiers were considered in~\citep{dembczynski2013optimizing}.

A consensus on the choice of the performance measure is still missing.
Yet, most recent works have pointed out that it is more rewarding to correctly predict a relevant\footnote{A label is called relevant for an instance if this instance is tagged with this label.} label than to give a correct prediction on irrelevant labels, see~\citep{Jain_Prabhu_Varma16} for a thorough discussion on this topic.
Such asymmetry is usually explained by the label space sparsity, that is, there is only a small set of relevant labels compared to the set of irrelevant ones.
It also suggests that the classical Hamming loss is not well tailored for sparse multi-label problems as it treats both false positives and false negatives equally; thus, some modifications ought to be proposed.

To introduce this asymmetric information in a learning algorithm, one can modify the objective loss function to be minimized.
For instance, in~\citep{Jain_Prabhu_Varma16} the authors have weighted each label, according to their observed frequency over a dataset.
These weights are motivated by the propensity model, which introduces a possibility of non-observing a relevant label.
To be more precise, \citet{Jain_Prabhu_Varma16} propose to down-weight the reward for correctly predicting a frequent label, which is motivated by the observation that the frequent labels can be easily predicted by a human.
In~\citep{Chzhen_Denis_Hebiri_Salmon17}, the authors proposed to weight false positive (irrelevant labels predicted to be relevant) and false negative (relevant labels predicted to be irrelevant) discoveries separately. The empirical risk minimization procedure was then analyzed thanks to Rademacher's complexity techniques.

Another possible direction is to consider a more complex family of loss functions, which are called non-decomposable, such as $\text{F}_1$-score or AUC among others.
A general class of loss functions which can be represented as a ratio of false discoveries is studied in~\citep{Koyejo_Natarajan_Ravikumar_Dhillon15}.
\citet{Koyejo_Natarajan_Ravikumar_Dhillon15} showed that the oracle (Bayes optimal) classifier can be obtained by thresholding the regression functions associated with each label, that is, the probability of a label to be relevant.
Additionally, the authors proved that algorithms based on plug-in are consistent and have a good empirical performance.
In a similar direction, \citet{dembczynski2013optimizing} empirically showed that plug-in algorithms outperform the ones based on the structured loss minimization, in the context of multi-label classification with $\text{F}_1$-score performance measure.
\citet{dembczynski2013optimizing} additionally established a statistical consistency of the considered algorithms.
Finally, convex empirical risk minimization was studied in~\citep{Gao_Zhou11}, where authors proved an infinite sample size consistency for convexified Hamming loss and ranking loss.
Consistency results are common in the multi-label classification literature.
Though, results of non-asymptotic nature, \eg excess risk bounds, have not received much attention in these settings.
% While the computational complexity of plug-in approaches is large and can not compete with the aforementioned tree based, embedding based, or distributed algorithms, it is still of importance to provide non-asymptotic guarantees similar to the one presented in~\citep{Audibert_Tsybakov07}.
% In this work, we propose a general multi-label framework and analyze two interesting examples of the framework.

Due to the sparse nature of the problem we propose to focus on classifiers that minimize false negative discoveries and exhibit desirable structural properties.
This can be seen as a problem constrained estimation, mainly considered in the settings of regression or parametric estimation~\citep{Lepski_89}.
In the constrained estimation, similarly to this case, the goal is to find an estimation which inherit some properties desired by a statistician.
In this work we consider two particular choices of structural constraints.
The first type of constraints describes classifiers with a bounded number of predicted labels: for instance, this approach appears naturally in recommendation systems.
Bayes optimal classifier in this context is given by the top-$K$ procedure, popular among practitioners: a label is predicted to be relevant if its associated score is among the top-$K$ values.
The popularity of this approach is reflected by several recent works~\citep{lapin15,Li2017ImprovingPR}, where top-$K$ procedures are studied both from  applied and asymptotic points of view.
In contrast, for this scenario, we establish a non-asymptotic excess risk bound for plug-in based classifiers.
The obtained bound can attain ``fast'' and ``super-fast'' (faster than $1/N$) rates under a multi-label version margin assumption.
Moreover, the bound is shown to be optimal in the minimax sense when instantiated to binary classification~\citep{Audibert_Tsybakov07}.

For the second scenario, we consider a set of classifiers with a control over false positive discoveries.
This can be relevant when one can tolerate a few false positive discoveries, but needs a parameter which quantifies the level of tolerance.
To provide guarantees for this instance, we introduce a different set of assumptions which reflects the label sparsity of a typical multi-label problem.
Under these assumptions, we prove an excess risk bound similar (in terms of  rates) to the bound obtained by~\citet{Denis_Hebiri15}, where the authors analyzed a binary classification framework with a control over the probability of rejection.

This paper is organized in the following way:
in Section~\ref{sec:notation}, we introduce notation used throughout the paper, formally state the considered framework and lay down important results that we use.
% In Section~\ref{sec:main}, we present our main results, starting by sketching our methodology to the considered problem.
Further, Sections~\ref{sec:k-sparse} and~\ref{sec:contr_mist_zero} are devoted to the theoretical analysis of plug-in rules in the two scenarios mentioned above.
We conclude the paper by a discussion on possible extensions in Section~\ref{sec:discussion_extensions}.
All the proofs are gathered in Appendix.

%%%%%%%%%%%%%%%%%%%%%%%%%%%%%%%%%%%%%%%%%%%%%%%%%%%%%%%%%%%%%%%%%%%%%%%%%%%%%%%
%%%%%%%%%%%%%%%%%%%%%%%%%%%%%%%%%%%%%%%%%%%%%%%%%%%%%%%%%%%%%%%%%%%%%%%%%%%%%%%
\section{Framework and notation}
\label{sec:notation}
%%%%%%%%%%%%%%%%%%%%%%%%%%%%%%%%%%%%%%%%%%%%%%%%%%%%%%%%%%%%%%%%%%%%%%%%%%%%%%%
%%%%%%%%%%%%%%%%%%%%%%%%%%%%%%%%%%%%%%%%%%%%%%%%%%%%%%%%%%%%%%%%%%%%%%%%%%%%%%%
In this section, we introduce the notation used in our work and present our proposed framework.
For any positive integer number $N$ we denote by $[N] = \{1, \ldots, N\}$ the set of integers between $1$ and $N$.
For any vector $a$ in a Euclidean space $\bbR^N$ and for all $i \in [N]$ we denote by $a^i$ the $i^{\text{th}}$ component of the vector $a$.
We denote by $\normin{\cdot}_0$ the $\ell_0$ norm of a vector, which in case of binary vectors reduces to the number of ones.
For every real numbers $a, b$ we denote by $a \wedge b$ the minimum between $a$ and $b$.
Let $(X, Y) \sim \Prob$, where $X \in \spacefeature  = \bbR^D$ and $Y = (Y^1, \ldots, Y^L)^\top \in \spacelabel = \binvec$.
Denote by $\Prob_{X}$ the marginal distribution of $X$.
A classifier $f = (f^1, \ldots, f^L)^\top$ is a measurable function from $\spacefeature$ to $\spacelabel$, that is $f: \spacefeature \mapsto \spacelabel$, and we write $\classall$ for the set of all classifiers (measurable functions).
Let $\eta(x) = (\eta^1(x), \ldots, \eta^L(x))^\top : \spacefeature \mapsto [0, 1]^L$ be the component wise regression function, meaning that for all $l \in [L]$ the $l^{\text{th}}$ component of $\eta(x)$ is given by $\eta^l(x) = \Prob(Y^l = 1 | X=x)$.
We denote by $\sigma = (\sigma_1,\dots,\sigma_L)$ a permutation\footnote{we omit the dependence on $x$ and write $\sigma$ instead of $\sigma(x)$.} of $[L]$ such that the regression functions is ranked as
    \[
        \eta^{\sigma_1}(x) \geq \ldots\geq \eta^{\sigma_L}(x)\enspace,
    \]
for all $x \in \bbR^D$.
The average false negative risk of a classifier $f \in \classall$ is denoted by
\begin{align}
    \label{eq:risk}
    \risk(f) = \frac{1}{L}\sum\limits_{l = 1}^L\Prob\{f^l(X) = 0, Y^l = 1\}\enspace.
\end{align}
For a fixed subset of predictors $\classgeneric \subset \classall$, specified according to the context, we define an $\classgeneric$-oracle classifier as
\begin{align}
    \label{eq:oracle}
    \oracle \in \argmin_{f \in \classgeneric} \risk(f)\enspace.
\end{align}
Notice that the $\classgeneric$-oracle rule $\oracle$ depends both on the distribution of $(X, Y)$ and on the set of predictors $\classgeneric$.
However, we suppress the explicit dependence on $\classgeneric$, when no ambiguity occurs.
We assume that the minimum is achieved by a classifier $\oracle \in \classgeneric$, though we do not assume that this classifier is unique.
Intuitively, our framework aims at minimization of the total number of mistakes on $Y^l = 1$ (relevant labels), over a class of prediction rules $\classgeneric$.
For example, the case $\classgeneric = \classall$ leads to an $\classall$-oracle $\oracle \equiv (1, \ldots, 1)^\top$, which reflects a complete tolerance over false positive discoveries.
This simple example shows, that the choice of $\classgeneric$ is a crucial part of the proposed framework.

% and one might expect different type of results for different choices.

Given a data sample $\data = \{(X_i, Y_i)\}_{i = 1}^N$, which consists of \iid copies of $(X, Y)$, the goal here is to construct an estimator $\estimator$, based on $\data$, of the $\classgeneric$-oracle $\oracle$.
Estimator $\estimator$ is a function that assigns a classifier to every learning sample $\data$, that is, $\estimator: \cup_{N = 1}^{\infty} (\spacefeature \times \spacelabel)^N \mapsto \classall$.
We denote by $\Probdata$ the product probability measure according to which the
data sample $\data$ is distributed, and by $\Expdata$ the expectation with respect to $\Probdata$.
The goal is to provide non-asymptotic bounds on the excess risk $\Expdata \big[\risk(\estimator)\big] - \risk(\oracle)$.
Additionally, we want our estimate $\estimator$ to satisfy one of the following conditions:
\begin{align}
  \label{eq:requirements}
  \estimator \in \classgeneric,\,\,\forall N\in\bbN;\quad \text{or } \quad \estimator \xrightarrow[N \to \infty]{} f \in \classgeneric\enspace,
\end{align}
where the kind of convergence is to be specified later.
Since, the $\classgeneric$-oracle $\oracle$ is typically available in a closed form and depends on an unknown, in practice, regression vector $\eta(x)$, we consider the plug-in type methods.

A vast amount of literature is focused on the estimation of the regression function $\eta(x)$, that is why this part is not a central object of our study.
In other words, we are rather interested in describing the performance of a classifier based on an arbitrary estimator $\heta(x)$ of the regression function $\eta(x)$ which satisfies for all $l \in [L]$ the following assumption:

\begin{assumption}[Exponential bound]
  \label{ass:exponential_heta}
   For some positive constants $C_1, C_2 > 0$ and $\gamma > 0$, for all $\delta > 0$ and for all $l \in [L]$ we have:
  \begin{align}
      \Probdata\{|\eta^l(x) - \heta^l(x)| \geq \delta\} \leq C_1\exp(-C_2N^{\gamma} \delta^2)\,\,\, \text{a.e. $x \in \bbR^D$ w.r.t. }\Prob_X\enspace.
  \end{align}
\end{assumption}
Such a bound holds for various type of estimators and distributions in both parametric~\citep{Li_Prasad_Ravikumar15} and non-parametric settings~\citep{Audibert_Tsybakov07}.
In non-parametric settings, typically, the parameter $\gamma$ depends on the smoothness of $\eta$ and on the dimension $D$.
Let us notice, that empirical evidences~\citep{dembczynski2013optimizing} suggest to use multinomial logistic regression as an effective estimator for the regression function, though, this estimator might not have the exponential concentration.
The rest of the paper is devoted to theoretical analysis of two specific families  $\classgeneric$.
In both cases we derive the $\classgeneric$-oracle classifier $\oracle$, defined in Eq.~\eqref{eq:oracle}.
Typically, the $\classgeneric$-oracle $\oracle$ depends on the regression function $\eta$, due to the form of the risk considered in Eq.~\eqref{eq:risk}.
Explicit expression for the $\classgeneric$-oracle, provides with a natural motivation to consider plug-in type rules for the construction of $\estimator$.
We establish one of the properties in Eq.~\eqref{eq:requirements} and introduce the set of additional assumptions in order to upper-bound the excess risk.

Let us finish this section with one generic notation used in this work.
For the estimator $\heta(x)$ we denote by $\hsigma = \hsigma(x)$ a permutation of $[L]$ such that the following holds for all $x \in \bbR^D$
    \[
        \heta^{\hsigma_1}(x) \geq \ldots\geq \heta^{\hsigma_L}(x)\enspace,
    \]
we again omit the dependence on $x$ and write $\hsigma$ instead of $\hsigma(x)$.
We reserve $\sigma$ and $\tau$ for a non-decreasing permutations of $\eta(x)$ and $\heta(x)$ respectively.

\section{Control over sparsity}
\label{sec:k-sparse}
%%%%%%%%%%%%%%%%%%%%%%%%%%%%%%%%%%%%%%%%%%%%%%%%%%%%%%%%%%%%%%%%%%%%%%%%%%%%%%%
%%%%%%%%%%%%%%%%%%%%%%%%%%%%%%%%%%%%%%%%%%%%%%%%%%%%%%%%%%%%%%%%%%%%%%%%%%%%%%%
In this section, we consider, the set of $K$-sparse classifiers, defined for a fixed $K \in [L]$ as:\
\begin{align}
    \classtop{K} \eqdef \{f \in \classall:\, \forall x\in\bbR^D,\,\, \norm{f(x)}_0 \leq K\}\enspace.
\end{align}
Hence, we are interested in a $K$-sparse classifier, which minimize the total number of mistakes on relevant labels.
It is not hard to see that, a $\classtop{K}$-oracle $\oracle$ is given by the top-$K$ procedure, this is stated formally in the following lemma:
\begin{lemma}[$\classtop{K}$-oracle classifier]
    \label{lem:oracle_sparse}
    An $\classtop{K}$-oracle $\oracle$ can be obtained for all $x \in \bbR^D$ as:
    \begin{align*}
        \oracle^{\sigma_1}(x) &= \ldots= \oracle^{\sigma_K}(x) = 1\enspace,\\
        \oracle^{\sigma_{K+1}}(x) &= \ldots= \oracle^{\sigma_L}(x) = 0\enspace.
    \end{align*}
\end{lemma}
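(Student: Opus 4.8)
The plan is to reduce minimization of $\risk$ over $\classtop{K}$ to a pointwise combinatorial problem in the label index $l \in [L]$. Since each $f^l$ is a measurable function of $x$ taking values in $\{0,1\}$, conditioning on $X$ and using $\eta^l(x) = \Prob(Y^l = 1 \mid X = x)$ gives
\[
    \Prob\{f^l(X) = 0,\, Y^l = 1\} = \int_{\bbR^D} (1 - f^l(x))\,\eta^l(x)\, d\Prob_X(x)\enspace.
\]
Summing over $l$ and moving the sum inside the integral,
\[
    \risk(f) = \frac{1}{L}\int_{\bbR^D} \sum_{l=1}^L \eta^l(x)\, d\Prob_X(x) \;-\; \frac{1}{L}\int_{\bbR^D} \sum_{l=1}^L f^l(x)\,\eta^l(x)\, d\Prob_X(x)\enspace,
\]
and the first term is a constant independent of $f$. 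Hence minimizing $\risk$ over $\classtop{K}$ is the same as maximizing $\int_{\bbR^D} \sum_{l=1}^L f^l(x)\,\eta^l(x)\, d\Prob_X(x)$ over $f \in \classtop{K}$.

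Next I would use that the constraint defining $\classtop{K}$ is ``local'' in $x$: a measurable $f : \spacefeature \mapsto \spacelabel$ lies in $\classtop{K}$ precisely when $\norm{f(x)}_0 \leq K$ for every $x$, and the objective above is the integral of a pointwise quantity, so it suffices to maximize the integrand for each fixed $x$ separately. For fixed $x$ this is the elementary problem of choosing $v \in \spacelabel$ with $\norm{v}_0 \leq K$ so as to maximize $\sum_{l=1}^L v^l\,\eta^l(x)$. A short exchange argument settles it: if some index $l \notin \{\sigma_1, \ldots, \sigma_K\}$ has $v^l = 1$ while some index in $\{\sigma_1,\dots,\sigma_K\}$ is unused, swapping them does not decrease $\sum_l v^l \eta^l(x)$ by definition of the ranking permutation $\sigma = \sigma(x)$; iterating, one may assume $v$ is supported in $\{\sigma_1,\dots,\sigma_K\}$, and since $\eta^l(x) \geq 0$ for all $l$, taking $v$ equal to the indicator of $\{\sigma_1,\dots,\sigma_K\}$ is optimal, with pointwise maximum $\sum_{j=1}^K \eta^{\sigma_j}(x)$. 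Assembling these pointwise choices yields exactly the top-$K$ rule claimed in the statement.

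It remains to check that this assembled map is a legitimate classifier, that is, measurable, so that it actually belongs to $\classtop{K} \subset \classall$ and realizes the minimum in~\eqref{eq:oracle}. Fixing a measurable selection of the ranking permutation $x \mapsto \sigma(x)$ (for instance, breaking ties by the natural order on $[L]$) makes $\oracle$ measurable, and the pointwise optimality above gives $\risk(\oracle) \leq \risk(f)$ for all $f \in \classtop{K}$; non-uniqueness when $\eta$ has ties at its $K$-th largest coordinate is immaterial, as already anticipated in Section~\ref{sec:notation}. The only mildly delicate points are this measurable selection and the bookkeeping around ties in $\eta$; the optimization itself is the trivial observation that a nonnegative linear functional over $K$-sparse binary vectors is maximized by keeping its $K$ largest coefficients.
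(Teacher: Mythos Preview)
Your argument is correct. The paper does not actually supply a proof of Lemma~\ref{lem:oracle_sparse}; it only remarks ``It is not hard to see that, a $\classtop{K}$-oracle $\oracle$ is given by the top-$K$ procedure'' and then states the lemma. Your route --- condition on $X$, rewrite $\risk(f)$ as a constant minus $\tfrac{1}{L}\int \sum_l f^l(x)\eta^l(x)\,d\Prob_X(x)$, exploit the pointwise nature of the constraint $\norm{f(x)}_0\le K$, and settle the resulting combinatorial problem by an exchange argument --- is exactly the natural one, and it is the same idea the paper does spell out for the companion result Lemma~\ref{lem:oracle_mistakes} (through the auxiliary Lemma~\ref{claim:pointwise_top_oracle}, where one shows that replacing any feasible $f$ by the top-$m(x)$ rule with the same number of positives can only decrease the risk). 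Your additional care about a measurable tie-breaking selection for $\sigma(x)$ is a point the paper leaves implicit.
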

\begin{remark}
    Observe, that in order to recover the $\classtop{K}$-oracle $\oracle$ the only information that is needed is $\{\sigma_1(x),\ldots, \sigma_K(x)\}$.
    In particular, any additional information about the regression vector $\eta(x)$ is not relevant.
\end{remark}
A plug-in strategy $\estimator$ in this case can be defined in a straightforward way for all $x \in \bbR^D$ as:
\begin{align}
    \label{eq:plug-in_rule_sparse}
    \estimator^{\hsigma_1}(x) &= \ldots = \estimator^{\hsigma_{K}}(x) = 1\enspace,\\
    \estimator^{\hsigma_{K+1}}(x) &= \ldots = \estimator^{\hsigma_L}(x) = 0\enspace.
\end{align}
Obviously, the plug-in estimator defined above is exactly a $K$-sparse classifier, that is $\estimator \in \classtop{K}$ for every choice of the data sample $\data$, as required in Eq.~\eqref{eq:requirements}.
Since our goal is to predict as relevant the labels with the top-$K$ probabilities, it is natural to restrict our attention to the distributions for which such top-$K$ labels are well separated.
In this context, we use a top-$K$ margin assumption in the following form:
\begin{assumption}[top-$K$ margin assumption]
    \label{ass:margin_sparse}
    We say that the regression vector $\eta(x)$ satisfies top-$K$ margin assumption, if
    there exist positive constants $C, \alpha$ such that for all $\delta > 0$:
    \[
        \Prob_X\{0 < \eta^{\sigma_K}(X) - \eta^{\sigma_{K+1}}(X) \leq \delta\} \leq C\delta^{\alpha}\enspace.
    \]
\end{assumption}
This assumption is similar to the classical margin assumption used in the context of binary classification.
Under Assumption~\ref{ass:margin_sparse} we can obtain the following bound on the excess risk of $\estimator$:
\begin{theorem}
    \label{main:sparse}
    Under Assumptions~\ref{ass:exponential_heta} and~\ref{ass:margin_sparse}, the excess risk of the plug-in classifier in Eq.~\eqref{eq:plug-in_rule_sparse} can be bounded as follows:
    \[
      \Expdata\big[\risk(\estimator)\big] - \risk(\oracle) \leq \tilde{C}K\frac{L-K}{L}N^{-\tfrac{\gamma(\alpha+1)}{2}}\enspace,
    \]
    for some universal constant $\tilde{C}$.
\end{theorem}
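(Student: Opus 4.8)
The plan is to rewrite the excess risk as the $\mathbb{E}_X$-average of a pointwise discrepancy between the estimated top-$K$ set $\hat S(x):=\{\hsigma_1(x),\dots,\hsigma_K(x)\}$ and the oracle set $S(x):=\{\sigma_1(x),\dots,\sigma_K(x)\}$, to expand this discrepancy over the $K(L-K)$ ``ordered pairs'' (one index in $S(x)$, one outside), and to control each pair via Assumption~\ref{ass:exponential_heta} sharpened through the margin Assumption~\ref{ass:margin_sparse}. Concretely: since $(X,Y)$ is independent of $\data$ and $\Prob(Y^l=1\mid X=x)=\eta^l(x)$, for any estimator one has $\Expdata[\risk(\estimator)]=\tfrac1L\,\mathbb{E}_X\Expdata\big[\sum_{l\notin\hat S(X)}\eta^l(X)\big]$; applying the same identity to the oracle of Lemma~\ref{lem:oracle_sparse}, the excess risk equals $\tfrac1L\,\mathbb{E}_X\Expdata[\Delta(X)]$ with
\[
\Delta(x)\eqdef\sum_{l\in S(x)}\eta^l(x)-\sum_{l\in\hat S(x)}\eta^l(x)\;\ge\;0 ,
\]
the sign being clear because $S(x)$ collects the $K$ largest coordinates of $\eta(x)$. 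Write $G(x)\eqdef\eta^{\sigma_K}(x)-\eta^{\sigma_{K+1}}(x)\ge 0$.

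The pointwise heart of the argument is the elementary fact that $a\notin\hat S(x)$ and $b\in\hat S(x)$ force $\heta^a(x)\le\heta^b(x)$. Enumerating $S(x)\setminus\hat S(x)=\{a_1,\dots,a_m\}$ and $\hat S(x)\setminus S(x)=\{b_1,\dots,b_m\}$ (same cardinality) gives $\Delta(x)=\sum_{i=1}^m(\eta^{a_i}(x)-\eta^{b_i}(x))$, where each summand is at least $G(x)$ (since $\eta^{a_i}\ge\eta^{\sigma_K}\ge\eta^{\sigma_{K+1}}\ge\eta^{b_i}$) and, by the displayed fact, at most $|\eta^{a_i}(x)-\heta^{a_i}(x)|+|\eta^{b_i}(x)-\heta^{b_i}(x)|$. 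A pigeonhole remark — if $l\in S(x)$ is pushed out of $\hat S(x)$, at least one index of $\hat S(x)$ overtaking it lies outside $S(x)$, and symmetrically — yields $\mathbf{1}\{l\in S(x)\setminus\hat S(x)\}\le\sum_{l'\notin S(x)}\mathbf{1}\{\heta^{l'}(x)\ge\heta^{l}(x)\}$ together with its mirror image, so that
\[
\Delta(x)\;\le\;\sum_{l\in S(x)}\ \sum_{l'\notin S(x)}\big(|\eta^{l}(x)-\heta^{l}(x)|+|\eta^{l'}(x)-\heta^{l'}(x)|\big)\,\mathbf{1}\{\heta^{l'}(x)\ge\heta^{l}(x)\}\enspace.
\]

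Now fix such a pair and set $W\eqdef|\eta^l(x)-\heta^l(x)|+|\eta^{l'}(x)-\heta^{l'}(x)|$. On $\{\heta^{l'}(x)\ge\heta^l(x)\}$ one has $W\ge\eta^l(x)-\eta^{l'}(x)\ge G(x)$, hence $\{\heta^{l'}(x)\ge\heta^l(x)\}\subseteq\{W\ge G(x)\}$, and the layer-cake identity combined with Assumption~\ref{ass:exponential_heta} (a union bound gives $\Probdata\{W\ge t\}\le 2C_1e^{-C_2N^\gamma t^2/4}$ a.e.\ $x$) yields
\[
\Expdata\big[W\,\mathbf{1}\{\heta^{l'}(x)\ge\heta^l(x)\}\big]\;\le\;2C_1\Big(G(x)\,e^{-C_2N^\gamma G(x)^2/4}+\int_{G(x)}^{\infty}e^{-C_2N^\gamma t^2/4}\,dt\Big)=:2C_1\,\Psi(G(x))\enspace.
\]
Summing over the $K(L-K)$ pairs (their number is $|S(x)|\cdot(L-|S(x)|)=K(L-K)$ for every $x$) and dividing by $L$ gives $\Expdata[\risk(\estimator)]-\risk(\oracle)\le\tfrac{2C_1K(L-K)}{L}\,\mathbb{E}_X[\Psi(G(X))]$. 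It remains to show $\mathbb{E}_X[\Psi(G(X))]\lesssim N^{-\gamma(\alpha+1)/2}$: for the first piece, integration by parts against the bound $\Prob_X\{G(X)\le\delta\}\le C\delta^{\alpha}$ (valid for $\delta>0$, with $G\le1$) followed by the substitution $s=\sqrt{C_2N^\gamma/4}\,g$ gives a quantity of order $N^{-\gamma(\alpha+1)/2}$; for the second piece, Fubini turns it into $\int_0^{\infty}\Prob_X\{G(X)\le t\}e^{-C_2N^\gamma t^2/4}\,dt$, and inserting $\Prob_X\{G(X)\le t\}\le Ct^{\alpha}\wedge 1$ and rescaling gives the same rate. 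Absorbing all constants into a single $\tilde C$ completes the proof.

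The main obstacle is this last integration. Obtaining the extra half-power $N^{-\gamma/2}$ on top of the crude $N^{-\gamma\alpha/2}$ requires retaining the margin factor $G(X)$ multiplying the Gaussian-type tail rather than bounding it by $1$, and then balancing it against the margin; and getting the sharp prefactor $K(L-K)/L$ — instead of the weaker $K\wedge(L-K)$ that a naive union bound over all $L$ coordinates would produce — is exactly what the pair-counting in the second step buys. A minor technical point is that Assumption~\ref{ass:margin_sparse} controls only $\{0<G(X)\le\delta\}$: the event $\{G(X)=0\}$ contributes nothing to the $G(X)e^{-\cdots}$ term automatically, and is of strictly lower order in the remaining term (or can be discarded under the innocuous extra hypothesis $\Prob_X\{\eta^{\sigma_K}(X)=\eta^{\sigma_{K+1}}(X)\}=0$).
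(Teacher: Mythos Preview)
Your proof is correct and follows essentially the same route as the paper: both rewrite the excess risk as a sum over the $K(L-K)$ ordered pairs (this is exactly Eq.~\eqref{eq:upper_sparse} in the paper), observe that a misordered pair forces $|\eta^{\sigma_l}-\heta^{\sigma_l}|+|\eta^{\sigma_j}-\heta^{\sigma_j}|\ge \eta^{\sigma_l}-\eta^{\sigma_j}\ge G(X)$, and then trade the exponential tail of Assumption~\ref{ass:exponential_heta} against the margin Assumption~\ref{ass:margin_sparse}. The only difference is in the bookkeeping of that last trade: the paper uses the dyadic peeling of \citet{Audibert_Tsybakov07} (sets $A_p=\{2^{p-1}\delta<\eta^{\sigma_l}-\eta^{\sigma_j}\le 2^p\delta\}$ with $\delta=N^{-\gamma/2}$), whereas you use the layer-cake identity and then integrate the resulting function $\Psi(G(X))$ against the margin bound; both are standard and give the same $N^{-\gamma(\alpha+1)/2}$.

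One small correction to your closing remark: on $\{G(X)=0\}$ the second piece of $\Psi$ equals $\int_0^\infty e^{-C_2N^\gamma t^2/4}\,dt\asymp N^{-\gamma/2}$, which is \emph{slower}, not lower order, than $N^{-\gamma(\alpha+1)/2}$. The paper's proof has the very same gap --- it passes from $\{0<\eta^{\sigma_l}-\eta^{\sigma_j}\le\delta\}$ to $\{0<\eta^{\sigma_K}-\eta^{\sigma_{K+1}}\le\delta\}$ without justification, and the monotonicity only gives the non-strict lower bound --- so the extra hypothesis $\Prob_X\{\eta^{\sigma_K}(X)=\eta^{\sigma_{K+1}}(X)\}=0$ you mention is implicitly needed in both arguments.
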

The proof of Theorem~\ref{main:sparse} is based on the following upper bound on the excess risk:
 \begin{align}
        \label{eq:upper_sparse}
         \risk(\estimator) - \risk(\oracle) \leq \Exp_{\Prob_X}\frac{1}{L}\sum\limits_{l=1}^K\sum\limits_{j = K+1}^{L}(\eta^{\sigma_l}(X) - \eta^{\sigma_j}(X))\ind{\estimator^{\sigma_l}(X) = 0, \estimator^{\sigma_j}(X) = 1}\enspace,
\end{align}
which in the case $L = 2$ and $K = 1$ reduces to the classical excess risk in binary classification.
We notice that there are two interesting consequences of this bound: first, the bound can attain ``fast'' ($1/N$) and ``super-fast'' (faster than $1/N$) rates of convergence in terms of $N$, depending on $\gamma$ and $\alpha$;
second, the value of the parameter $K$ (chosen by the practitioner) is often small in applications compared to the total amount of labels $L$.
Hence, the obtained bound illustrates the good performance of the proposed method as it behaves proportionally to $K$ rather than to $L$.
This is crucial when one tries to address scenarios where the total amount of observations $N$ is of the same order as $L$.
Moreover, we expect that the dependence on $K$ and $L$ can be improved or even avoided, since the upper bound on the excess risk in Eq.~\eqref{eq:upper_sparse} is rather rough.

%%%%%%%%%%%%%%%%%%%%%%%%%%%%%%%%%%%%%%%%%%%%%%%%%%%%%%%%%%%%%%%%%%%%%%%%%%%%%%%
%%%%%%%%%%%%%%%%%%%%%%%%%%%%%%%%%%%%%%%%%%%%%%%%%%%%%%%%%%%%%%%%%%%%%%%%%%%%%%%
\section{Control over false positives}
\label{sec:contr_mist_zero}
%%%%%%%%%%%%%%%%%%%%%%%%%%%%%%%%%%%%%%%%%%%%%%%%%%%%%%%%%%%%%%%%%%%%%%%%%%%%%%%
%%%%%%%%%%%%%%%%%%%%%%%%%%%%%%%%%%%%%%%%%%%%%%%%%%%%%%%%%%%%%%%%%%%%%%%%%%%%%%%
In this section, we consider the set of classifiers with controlled false positive discoveries, defined for a fixed $\beta \in [L]$ as:
\begin{align}
        \classbeta{\beta} \eqdef \Big\{f \in \classall :\, \sum\limits_{l = 1}^L\Prob\{f^l(X) = 1, Y^l = 0| X\} \leq \beta,\,\asx\Big\}\enspace.
\end{align}
% \begin{remark}
%     Notice that if $\beta = \beta(X)$ is a positive function of $X$, upper bounded by $L$, hence $\classbeta{\beta}$ is non-random, since the constrained is defined $\Prob_X$-almost surely.
% \end{remark}
\begin{remark}
    \label{rem:on_fp_sp_inclusion}
    One should note that the following inclusion holds for all $\beta \in [L]$:
    \[
        \classtop{\beta} \subset \classbeta{\beta}\enspace,
    \]
    which indicates that the top-$\beta$ strategy controls the false positive discoveries. This is intuitive, as the top-$\beta$ procedure is not making more than $\beta$ false positive discoveries.
    However, this control is not optimal in a situation when a larger (compared to $\beta$) set of labels could be relevant.
    In such a scenario, the $\classbeta{\beta}$-oracle classifier is more advantageous as it is able to output a larger set of potentially relevant labels and still has a controlled false positive discoveries.
\end{remark}
As in the previous section, the $\classbeta{\beta}$-oracle classifier is given by thresholding the top components of the regression function.
However, unlike the previous sparse strategy, the amount of positive components can be different for every $x \in \bbR^D$.
\begin{lemma}[Oracle classifier]
    \label{lem:oracle_mistakes}
    An $\classbeta{\beta}$-oracle $\oracle$ can be obtained for every $x \in \bbR^D$ as
    \begin{align*}
        \oracle^{\sigma_1}(x) &= \ldots = \oracle^{\sigma_{K}}(x) = 1\enspace,\\
        \oracle^{\sigma_{K+1}}(x) &= \ldots = \oracle^{\sigma_L}(x) = 0\enspace,
    \end{align*}
    where $K = K(x)$ is defined as
    \begin{align}
        \label{eq:top_K_X}
        K(x)= \max\big\{m \in [L] : \sum\limits_{l = 1}^m(1 - \eta^{\sigma_l}(x)) \leq \beta\big\}\enspace.
    \end{align}
\end{lemma}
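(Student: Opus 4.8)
The plan is to reduce the constrained minimization of $\risk$ over $\classbeta{\beta}$ to a pointwise, finite-dimensional problem and then solve that problem by a greedy/exchange argument. Since a classifier is a deterministic map, for each label $\Prob\{f^l(X)=0,Y^l=1\}=\Exp_{\Prob_X}\big[\ind{f^l(X)=0}\,\eta^l(X)\big]$, so $\risk(f)=\tfrac1L\Exp_{\Prob_X}\big[\sum_{l=1}^L\ind{f^l(X)=0}\,\eta^l(X)\big]$, and likewise $\sum_{l=1}^L\Prob\{f^l(X)=1,Y^l=0\mid X=x\}=\sum_{l=1}^L\ind{f^l(x)=1}\big(1-\eta^l(x)\big)$. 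Hence $f\in\classbeta{\beta}$ is exactly the requirement that, for $\Prob_X$-almost every $x$, the binary vector $f(x)$ satisfies the budget constraint $\sum_l\ind{f^l(x)=1}(1-\eta^l(x))\le\beta$. Because this constraint decouples across $x$ while the objective is an integral in $x$, it suffices to minimize, for each fixed $x$, the inner sum $\sum_l\ind{f^l(x)=0}\eta^l(x)$ over budget-feasible binary vectors; I would then observe that a minimizer can be chosen measurably by fixing a measurable version of the ranking $\sigma(x)$ (ties broken, say, by index), so that the resulting $\oracle$ is a genuine classifier, feasible a.e., hence in $\classbeta{\beta}$.

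For the pointwise problem at a fixed $x$, I would pass to the set $S=\{l:f^l(x)=1\}$ of predicted-relevant labels. Since $\sum_l\eta^l(x)$ does not depend on $S$, minimizing $\sum_{l\notin S}\eta^l(x)$ is the same as maximizing $\sum_{l\in S}\eta^l(x)$ subject to $c(S)\eqdef\sum_{l\in S}(1-\eta^l(x))\le\beta$. The key simplification is the identity $\eta^l(x)+(1-\eta^l(x))=1$, which rewrites the objective as $\lvert S\rvert-c(S)$. For a prescribed cardinality $m$, the cost $c(S)$ is minimized precisely by the prefix $S_m\eqdef\{\sigma_1(x),\dots,\sigma_m(x)\}$ — the $m$ labels with largest $\eta^l(x)$, equivalently smallest $1-\eta^l(x)$ — with $c(S_m)=\sum_{l=1}^m(1-\eta^{\sigma_l}(x))$. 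Consequently, among cardinality-$m$ sets, $S_m$ simultaneously maximizes $\lvert S\rvert-c(S)$ and is ``most feasible'': if some cardinality-$m$ set meets the budget, then so does $S_m$.

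It then remains to compare across cardinalities. The sequence $m\mapsto c(S_m)$ is non-decreasing (its $m$-th increment is $1-\eta^{\sigma_m}(x)\ge0$), so the feasible prefix lengths form exactly $\{1,\dots,K(x)\}$ with $K(x)$ as in Eq.~\eqref{eq:top_K_X}; this set is nonempty because $\beta\ge1\ge c(S_1)$, which also shows $K(x)$ is well defined. By the previous paragraph, every budget-feasible $S$ has $\lvert S\rvert\le K(x)$ and $\lvert S\rvert-c(S)\le\lvert S\rvert-c(S_{\lvert S\rvert})$. Finally, $m\mapsto m-c(S_m)$ is non-decreasing on $\{0,1,\dots,K(x)\}$ since its increment is $1-(1-\eta^{\sigma_m}(x))=\eta^{\sigma_m}(x)\ge0$; hence it is maximized at $m=K(x)$, attained by $S_{K(x)}$. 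Translating back, the pointwise minimizer is the vector with $f^{\sigma_l}(x)=1$ for $l\le K(x)$ and $0$ otherwise, which is the announced $\oracle$, and since $c(S_{K(x)})\le\beta$ for every $x$ this $\oracle$ indeed lies in $\classbeta{\beta}$. I expect the only delicate point to be this last combinatorial step: proving that prefix sets are optimal not merely among sets of a fixed size but among \emph{all} budget-feasible sets. The two monotonicity facts ($c(S_m)$ non-decreasing, and $m-c(S_m)$ non-decreasing on the feasible range) are exactly what delivers it, and both are immediate from $0\le\eta^l(x)\le1$.
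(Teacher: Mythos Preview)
Your proof is correct and follows essentially the same approach as the paper: both reduce to a pointwise optimization, show that prefix sets $\{\sigma_1,\dots,\sigma_m\}$ dominate all feasible sets of the same cardinality (the paper isolates this as a separate lemma), and then argue that the largest feasible prefix $m=K(x)$ is best. Your rewriting of the objective as $|S|-c(S)$ and the two monotonicity observations is a slightly more compact packaging of the same two steps, but the underlying argument is identical.
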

In this case the optimal strategy can be characterized as top-$K(X)$, where $K(X)$ is a random variable defined in Eq.~\eqref{eq:top_K_X}.
Intuitively, for each feature vector $x \in \bbR^D$ the threshold $K(x)$ selects labels with high probability to be relevant, the larger the value $\beta$ (which indicates the higher level of tolerance), the more labels are predicted to be relevant.
\begin{remark}
    Notice, that unlike the previous scenario, it is not sufficient to recover the ordering of the regression function $\eta(x)$ to obtain the $\classbeta{\beta}$-oracle.
    Indeed, due to the definition of $K(X)$, even the knowledge of the whole non-decreasing permutation $\sigma$ is not sufficient without additional information about the components of the regression vector $\eta(x)$.
\end{remark}
Similarly to the previous section, a natural plug-in strategy $\estimator$ reads for all $x \in \bbR^D$:
\begin{align}
    \label{eq:plug-in_rule}
    \estimator^{\hsigma_1}(x) &= \ldots = \estimator^{\hsigma_{\htop}}(x) = 1\enspace,\\
    \estimator^{\hsigma_{\htop+1}}(x) &= \ldots = \estimator^{\hsigma_L}(x) = 0\enspace,
\end{align}
where $\htop = \htop(x)$ is defined as
\begin{align*}
    \htop(x) = \max\big\{m \in [L] : \sum\limits_{l = 1}^m(1 - \heta^{\hsigma_l}(x)) \leq \beta\big\}\enspace.
\end{align*}
Ultimately, to recover the $\classbeta{\beta}$-oracle $\oracle$ we need to estimate both: the non-decreasing permutation $\sigma$ and the regression vector $\eta$.
Since, we do not have an access to neither of those quantities, we use an estimator $\heta$ and it's own non-decreasing permutation $\tau$.
We define $\hclassbeta{\beta}$, replacing $(\eta^l(x))_{l = 1}^L$ by $(\heta^l(x))_{l = 1}^L$ in the definition of $\classbeta\beta$,
in order to prove one of the properties in Eq.~\eqref{eq:requirements}
\begin{definition}[Plug-in $\classbeta{\beta}$-set]
    \label{def:plug-in_beta_accurate_set}
    For every $\beta \in [L]$ we denote the plug-in $\beta$-set as
    \begin{align*}
        \hclassbeta{\beta} \eqdef \Big\{f \in \classall :\, \sum\limits_{l = 1}^L\ind{f^l(X) = 1}(1 - \heta^l(X)) \leq \beta,\,\asx\Big\}\enspace.
    \end{align*}
\end{definition}

Due to the approximation error of $\heta$ the plug-in rule $\estimator$ does not necessary belong to the set $\classbeta{\beta}$ (hence is not comparable to the $\classbeta{\beta}$-oracle $\oracle$).
However, if the estimator $\heta$ of $\eta$ is consistent, then every classifier $f \in \hclassbeta{\beta}$ has asymptotically bounded false positive discoveries on the level $\beta$.
% However, if the approximation error is negligible, then $\estimator$ is a member of $\classbeta{\bbeta}$ (where $\bbeta$ depends both on the distribution $\Prob_X$ and on the data $\data$) converging to $\beta$ in probability, when $N\rightarrow \infty$.
% \begin{lemma}[Embedding of the plug-in set]
%     \label{lem:embeding_of_plug-in_beta_class}
%     It holds that $\hclassbeta{\beta} \subset \classbeta{\bbeta}$, where $\bbeta$ is defined as
%     \[
%         \bbeta = \beta + \sum\limits_{l=1}^L|\heta^l(X) - \eta^l(X)|\enspace.
%     \]
% \end{lemma}
\begin{lemma}[Embedding of the plug-in set]
    \label{lem:embeding_of_plug-in_beta_class}
    There exists $\bbeta$ which satisfies $\bbeta \leq \beta + \sum_{l = 1}^L \norm{\eta^l - \heta^l}_{\infty}$ such that for every $f \in \hclassbeta{\beta}$, we have
    \[
        f \in \classbeta{\bbeta}\enspace.
    \]
    Moreover, under Assumption~\ref{ass:exponential_heta}, with probability at least $1 - \epsilon$ over the dataset $\data$ it holds that
    \[
        \hclassbeta{\beta} \subset \classbeta{\bbeta} \enspace,
    \]
    where $\bbeta = \beta + O(LN^{-\gamma/2}\sqrt{\ln(C_1/\epsilon)})$.
\end{lemma}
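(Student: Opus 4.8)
\quad The starting observation is that, since every coordinate $f^l$ is a deterministic function of $X$, the false positive constraint is, conditionally on $X$, non-random: for each $l\in[L]$ one has $\Prob\{f^l(X)=1,\,Y^l=0\mid X\}=\ind{f^l(X)=1}(1-\eta^l(X))$. Hence $f\in\classbeta{\beta}$ if and only if $\sum_{l=1}^L\ind{f^l(X)=1}(1-\eta^l(X))\le\beta$ holds for $\Prob_X$-a.e.\ $X$, which is exactly the inequality defining $\hclassbeta{\beta}$ but with the true $\eta$ in place of the estimate $\heta$. The lemma therefore reduces to comparing these two linear-in-$\ind{f^l=1}$ constraints through the estimation error $\heta-\eta$.

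For the first assertion, fix $f\in\hclassbeta{\beta}$. For $\Prob_X$-a.e.\ $x$, splitting $1-\eta^l(x)=(1-\heta^l(x))+(\heta^l(x)-\eta^l(x))$, using $0\le\ind{f^l(x)=1}\le1$ together with $f\in\hclassbeta{\beta}$, and finally the definition of the (essential) sup-norm, one gets
\[
    \sum_{l=1}^L\ind{f^l(x)=1}(1-\eta^l(x))\;\le\;\beta+\sum_{l=1}^L|\heta^l(x)-\eta^l(x)|\;\le\;\beta+\sum_{l=1}^L\norm{\eta^l-\heta^l}_{\infty}\enspace.
\]
Taking $\bbeta\eqdef\beta+\sum_{l=1}^L\norm{\eta^l-\heta^l}_{\infty}$ then satisfies the required inequality and yields $f\in\classbeta{\bbeta}$ for every $f\in\hclassbeta{\beta}$ simultaneously.

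For the second assertion, by the first part it suffices to show that, under Assumption~\ref{ass:exponential_heta}, the random quantity $\sum_{l=1}^L\norm{\eta^l-\heta^l}_{\infty}$ is $O(LN^{-\gamma/2}\sqrt{\ln(C_1/\epsilon)})$ with probability at least $1-\epsilon$ over $\data$. I would pick $\delta_\epsilon\eqdef(C_2N^{\gamma})^{-1/2}\sqrt{\ln(C_1L/\epsilon)}$, so that the bound in Assumption~\ref{ass:exponential_heta} becomes $\epsilon/L$, apply it for each coordinate $l$, and take a union bound over $l\in[L]$; this gives $\norm{\eta^l-\heta^l}_{\infty}<\delta_\epsilon$ for all $l$ with probability at least $1-\epsilon$, hence $\sum_{l=1}^L\norm{\eta^l-\heta^l}_{\infty}<L\delta_\epsilon$, with the factor $\ln(C_1L/\epsilon)$ absorbed into the $O(\cdot)$. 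Combined with the first part this produces $\hclassbeta{\beta}\subset\classbeta{\bbeta}$ with $\bbeta=\beta+O(LN^{-\gamma/2}\sqrt{\ln(C_1/\epsilon)})$.

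The delicate step is the passage, in the previous paragraph, from the \emph{pointwise-in-$x$} exponential bound of Assumption~\ref{ass:exponential_heta} to control of the sup-norm $\norm{\eta^l-\heta^l}_{\infty}$ that holds for $\Prob_X$-a.e.\ $x$ simultaneously with high probability over $\data$: the quantifiers ``for $\Prob_X$-a.e.\ $x$'' and ``with probability at least $1-\epsilon$ over $\data$'' do not commute, and a plain Fubini argument only bounds the $\Prob_X$-measure of the set of $x$ on which $|\eta^l-\heta^l|$ is large, which is too weak to conclude membership in $\classbeta{\bbeta}$ (an a.e.-in-$X$ statement). I would resolve this by recording Assumption~\ref{ass:exponential_heta} in its uniform-in-$x$ form, $\Probdata\{\norm{\eta^l-\heta^l}_{\infty}\ge\delta\}\le C_1\exp(-C_2N^{\gamma}\delta^2)$, which is what the cited parametric and local-polynomial (under H\"older smoothness on a compact) estimators actually satisfy, after which the union bound above applies verbatim; alternatively, keeping only the pointwise bound one obtains a slightly weaker statement in which the inclusion $\hclassbeta{\beta}\subset\classbeta{\bbeta}$ holds up to an event in $X$ of arbitrarily small $\Prob_X$-probability, which is still enough for the subsequent excess-risk analysis.
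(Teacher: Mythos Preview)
Your argument for the first assertion is exactly the paper's: introduce $B(z)=\sum_{l}\ind{f^l(X)=1}(1-z^l)$, use $B(\heta(X))\le\beta$ a.s., and bound $|B(\heta(X))-B(\eta(X))|\le\sum_{l}|\heta^l(X)-\eta^l(X)|$ by the triangle inequality; you then take one more harmless step to the sup-norm, which the paper leaves implicit.

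For the ``Moreover'' part, you actually go further than the paper, whose written proof stops after the first assertion. Your route (choose $\delta_\epsilon$ so that the exponential bound equals $\epsilon/L$, union-bound over $l$) is the natural one and gives the stated $O(LN^{-\gamma/2}\sqrt{\ln(C_1/\epsilon)})$. Your caveat about the order of quantifiers---that Assumption~\ref{ass:exponential_heta} is pointwise in $x$ while membership in $\classbeta{\bbeta}$ requires an a.e.-in-$X$ statement for a \emph{single} high-probability event over $\data$---is well taken and is a point the paper does not address; your proposed fix (read the assumption in its uniform-in-$x$ form, which the cited estimator classes do satisfy) is the appropriate resolution.
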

Since, clearly, $\estimator \in \hclassbeta{\beta}$ by construction, we establish the second requirement in Eq.~\eqref{eq:requirements}, which is a desired property as we want to restrict our attention to the collection of classifiers $\classbeta{\beta}$.
% From the previous lemma, it is natural to assume that $L < N^{\gamma/2}$, such a relation between the parameters of the problem ensures that the plug-in approach has bounded false positive discoveries on the level which is close to $\beta$.
\begin{assumption}[Local margin assumption]
    \label{ass:separability}
    We say that the regression vector $\eta(x)$ satisfies {local margin} assumption, if there exist constants $C_0 > 0, \alpha_{1} > 0$ such that for all $\delta > 0$, we have
    \begin{align*}
        \Prob_X\big\{\eta^{\sigma_k}(X) - \eta^{\sigma_{k+1}}(X) \leq \delta, K(X) = k\big\} \leq C_0\delta^{\alpha_1}\enspace.
    \end{align*}
\end{assumption}
This assumption states that in the optimal thresholding $K(X) = k$ there is a gap between the $k^{\text{th}}$ and $(k + 1)^{\text{th}}$ regression function, which is similar to Assumption~\ref{ass:margin_sparse}.
This is needed in order to recover the permutation $\sigma$, at least partially until its $K^{\text{th}}$ element.
However, since the amount of labels is not fixed a priori and is itself a random variable, the form of this assumption slightly differs from Assumption~\ref{ass:margin_sparse}.
Additionally one should observe that unlike Assumption~\ref{ass:margin_sparse}, the later restricts the possibility of $\eta^{\sigma_k}(X)$ and $\eta^{\sigma_{k+1}}(X)$ to coincide on a set of large measure, which is similar to~\citep{tsybakov2004}.
\begin{assumption}[Sparsity]
    \label{ass:sparsity_general}
    We say that the regression vector $\eta(x)$ satisfies {sparsity} assumption, if for a positive integer $S$ smaller than $L$, we have
    \[
        \sum\limits_{l = 1}^L \Prob\{Y^l = 1 | X\} \leq S, \quad \asx
    \]
\end{assumption}
This assumption is similar to the one used in~\citep{Chzhen_Denis_Hebiri_Salmon17}, and aims at leveraging sparsity of most real datasets.
It is natural to expect that the value of the sparsity $S$ is smaller than the total amount of labels $L$.
Even though, our analysis does not explicitly assume this relation between $S$ and $L$, bounds that we obtain are more advantageous for such a scenario.
We finally introduce the assumption that is more structural and states that the sum of top regression functions is not too concentrated around $\beta$.
\begin{assumption}[Global margin assumption]
    \label{ass:sparsity_struct}
    We say that the regression vector $\eta(x)$ satisfies {global margin} assumption if, there exists $\alpha_2 > 0$, such that for all $k \geq \beta$, for all $l \in [k]$, and for all $\delta > 0$, we have
    \[
        \Prob_X\Big\{\frac{1}{l}|\sum_{j = k - l + 1}^{k}(1 - \eta^{\sigma_j}(X)) - \beta| \leq \delta, K(X) = k\Big\} \leq \beta^{\alpha_2}\delta^{\alpha_2}h(|k - l|)\enspace,
    \]
    where $h : \bbR_+ \mapsto \bbR_+$ such that $h(0) = 1$ and
     \[
        \sum\limits_{k = \beta}^L \sum\limits_{l = 1}^L h(|k - l|) \leq \tilde{C}(L - \beta)\enspace,
     \]
     for some $\tilde{C} > 0$.
\end{assumption}
% This assumption goes together with Assumption~\ref{ass:separability}
% This assumption is required to exclude the situation when the regression function sums up to $S$ but each individual component of the regression function is of order $S/L$.
The multiplier $\beta^{\alpha_2}$ is due to the fact that the following inequality must always be satisfied for $\delta = \tfrac{1}{k}$ and $l = k$:
\begin{align*}
    \sum\limits_{k = \beta}^L\Prob_X\Big(\frac{1}{k}|\sum_{j =1}^{k}(1 - \eta^{\sigma_j}(X)) - \beta| \leq \frac{1}{k}, K(X) = k\Big)
    &= \sum\limits_{k = \beta}^L\Prob_X\Big(K(X) = k\Big)\\
     &= 1 \leq \sum\limits_{k = \beta}^L \beta^{\alpha_2}\frac{1}{k^{\alpha_2}}h(0)\enspace,
\end{align*}
where the first equality holds since on the event $K(X) = k$ the quantity $|\sum_{j =1}^{k}(1 - \eta^{\sigma_j}(X)) - \beta|$ is always upper bounded by one.
The definition of the function $h$ states that the matrix $H_{k,l} = h(|k - l|)$ is a diagonally dominant matrix such that for all $k \geq \beta$ we have $A H_{k, k} \geq \sum_{l\neq k}H_{k, l}$ for some positive constant $A$ independent from $L$.

Let us provide a simple intuition for the necessity of Assumption~\ref{ass:sparsity_struct}.
Consider the following multi-label classification problem: $L \geq 2$, $S = 1$, $\beta = 1$, $X \in [0, 1]$.
And let us define two probability measures $\Prob_{-1}, \Prob_{+1}$ which have the marginal distribution $\Prob_{\pm 1, X} \equiv \lambda$, where $\lambda$ is the Lebesgue measure on $[0, 1]$.
Under both $\Prob_{-1}, \Prob_{+1}$ the labels $Y^1, \ldots, Y^L$ are independent,
$Y^l \equiv 0$ for all $l = 3, \ldots, L$, $\Prob_{\rho}(Y^1 = 1 | X) \equiv 3/4$, and $\Prob_{\rho}(Y^2 = 1 | X) = \eta^2_{\rho}(X) = 1/4 - \rho \phi^{-1}_N$ for $\rho \in \{-1, 1\}$ and some strictly increasing sequence $\phi_N$ of $N \in \bbN$.
Assume also that $\phi_N$ is chosen in such a way that $\phi^{-1}_1 \leq 1/8$.
One can see, thanks to Lemma~\ref{lem:oracle_mistakes}, that the Oracle classifiers under $\Prob_{-1}$ and $\Prob_{+1}$ are given by
\begin{align*}
  \oracle^{-1}(x) &= (1, 1, 1, \ldots, 1)^\top\enspace,\\
  \oracle^{+1}(x) &= (1, 0, 0, \ldots, 0)^\top\enspace,
\end{align*}
respectively and the optimal thresholds are $K^{-1}(x) \equiv L$, $K^{+1}(x) \equiv 1$.
Now, let us consider minimax risk\footnote{We are forced to put an absolute value in this discussion, since an "estimator" that always outputs the vector $(1, \ldots, 1)^\top$ achieves zero risk and the minimax risk without the absolute value is always non-positive.} over $\mathcal{P} = \{\Prob_{+1}, \Prob_{-1}\}$ defined as
\begin{align*}
    \inf_{\estimator}\sup_{\Prob \in \mathcal{P}}\Exp_{\Prob^{\otimes N}}\abs{\risk(\estimator) - \risk(\oracle)}
    &=
    \inf_{\estimator}\sup_{\rho \in \{-1, 1\}}\Exp_{\Prob^{\otimes N}_{\rho}}\abs{\mathcal{R}_{\Prob_\rho}(\estimator) - \mathcal{R}_{\Prob_\rho}(\oracle^\rho)}\enspace.
    % &=
    % \frac{1}{L}\inf_{\estimator}\sup_{\rho \in \{-1, 1\}}\Exp_{\Prob^{\otimes N}_{\rho}}\abs{\frac{3}{4}\int_{0}^1\left(\ind{\estimator^1(x) = 0} - \ind{(\oracle^\rho)^1(x) = 0}\right) dx + \int_{0}^1 \eta^2_{\rho}(x)\left(\ind{\estimator^2(x) = 0} - \ind{(\oracle^\rho)^2(x) = 0}\right)}\\
    % &\geq
    % \frac{1}{4L}\inf_{\estimator}\sup_{\rho \in \{-1, 1\}}\Exp_{\Prob^{\otimes N}_{\rho}}\int_{0}^1\ind{\estimator^1(x) \neq (\oracle^\rho)^1(x)}dx
\end{align*}
Then, for the excess risk $\excess_\rho(\estimator) = \abs{\mathcal{R}_{\Prob_\rho}(\estimator) - \mathcal{R}_{\Prob_\rho}(\oracle^\rho)}$ we can write
\begin{align*}
  \excess_\rho(\estimator) = \frac{1}{L}\abs{\frac{3}{4}\int_{0}^1\ind{\estimator^1(x) = 0} dx + \int_{0}^1 \eta^2_{\rho}(x)\left(\ind{\estimator^2(x) = 0} - \ind{(\oracle^\rho)^2(x) = 0}\right)dx}\enspace,
\end{align*}
moreover, using the triangle inequality we can lower bound $\excess_{+1}(\estimator) + \excess_{-1}(\estimator)$ by
\begin{align*}
    \frac{1}{L}\abs{-2\phi^{-1}_N\int_0^1\ind{\estimator^2(x) = 0}dx - \int_{0}^1 \eta^2_{+1}(x)\ind{(\oracle^{+1})^2(x) = 0}  dx+ \int_{0}^1 \eta^2_{-1}(x)\ind{(\oracle^{-1})^2(x) = 0} dx}\enspace.
\end{align*}
Recall that $\enscond{x \in [0, 1]}{(\oracle^{+1})^2(x) = 0} = [0, 1]$ and $\enscond{x \in [0, 1]}{(\oracle^{-1})^2(x) = 0} = \emptyset$, thus
\begin{align*}
  \excess_{+1}(\estimator) + \excess_{-1}(\estimator)
  &\geq
  \frac{1}{L}\abs{-2\phi^{-1}_N\int_0^1\ind{\estimator^2(x) = 0}dx - \frac{1}{4} + \phi^{-1}_N}\\
  &\geq
  \frac{1}{4L} - \frac{1}{L}\abs{-2\phi^{-1}_N\int_0^1\ind{\estimator^2(x) = 0}dx + \phi^{-1}_N}\\
  &\geq
  \frac{1}{4L} - \frac{\phi^{-1}_N}{L}\int_0^1\abs{\ind{\estimator^2(x) = 0} - \ind{\estimator^2(x) = 1}}dx\\
  &=
  \frac{1}{4L} - \frac{\phi^{-1}_N } {L}\geq \frac{1}{8L}\enspace.
\end{align*}
where the last inequality is due to our choice of $\phi_N$.
Now, let us define another probability measure $\Prob_0$ on $[0, 1] \times \{0, 1\}^L$, such that $\Prob_{0, X} \equiv \lambda$ on $[0, 1]$, $Y^1, \ldots, Y^L$ are independent, $Y^l \equiv 0$ for $l = 3, \ldots, L$ and $\Prob_0(Y^1 = 1 | X) \equiv 3/4$, $\Prob_0(Y^2 = 1 | X) \equiv 1/4$.
Importantly, both $\Prob_{-1}$ and $\Prob_{+1}$ are absolutely continuous \wrt to $\Prob_0$.
Let us write $(*) = \inf_{\estimator}\sup_{\rho \in \{-1, 1\}}\Exp_{\Prob^{\otimes N}_{\rho}}\abs{\mathcal{R}_{\Prob_\rho}(\estimator) - \mathcal{R}_{\Prob_\rho}(\oracle^\rho)}$, thus
\begin{align*}
  (*)
  &\geq
  \inf_{\estimator}\frac{\sum_{\rho \in \{-1, 1\}}\Exp_{\Prob^{\otimes N}_{{+1}}}\abs{\mathcal{R}_{\Prob_{\rho}}(\estimator) - \mathcal{R}_{\Prob_{\rho}}(\oracle^{\rho})}}{2}\\
  &=
  \inf_{\estimator}\frac{\Exp_{\Prob^{\otimes N}_{{0}}}\left[\frac{d \Prob^{\otimes N}_{{+1}}}{d\Prob^{\otimes N}_{{0}}}\excess_{+1}(\estimator)\right] + \Exp_{\Prob^{\otimes N}_{{0}}}\left[\frac{d \Prob^{\otimes N}_{{-1}}}{d\Prob^{\otimes N}_{{0}}}\excess_{-1}(\estimator)\right]}{2}\\
  &\geq
  \inf_{\estimator}\frac{\Exp_{\Prob^{\otimes N}_{{0}}}\left[\min\left\{\frac{d \Prob^{\otimes N}_{{+1}}}{d\Prob^{\otimes N}_{{0}}}, \frac{d \Prob^{\otimes N}_{{-1}}}{d\Prob^{\otimes N}_{{0}}}\right\}\left(\excess_{+1}(\estimator) + \excess_{-1}(\estimator) \right)\right]}{2}\\
  &\geq
  \frac{1}{16 L} \left(1 - \text{TV}(\Prob^{\otimes N}_{{+1}}, \Prob^{\otimes N}_{{-1}})\right)
\end{align*}
where $\text{TV}(\cdot, \cdot)$ is the total variation distance between probability measures.
Note, however, that for a sufficiently fast decaying\footnote{One can upper bound the total variation using Pinsker's inequality and observe that the problem reduces to the Kullback-Leibler divergence between two Bernoulli variables.} $\phi_N$ we can guarantee that $\text{TV}(\Prob^{\otimes N}_{+1}, \Prob^{\otimes N}_{-1}) \leq 1/2$, which implies that
\begin{align*}
  \inf_{\estimator}\sup_{\Prob \in \mathcal{P}}\Exp_{\Prob^{\otimes N}}\abs{\risk(\estimator) - \risk(\oracle)} \geq \frac{1}{16 L}\enspace.
\end{align*}
First of all, observe that the distributions constructed above satisfy Assumptions~\ref{ass:separability},~\ref{ass:sparsity_general} and the corresponding regression functions are constant.
In particular, these regression functions are infinitely many times differentiable, the marginal distribution admits density \wrt Lebesgue measure supported on $[0, 1]$, and an estimator achieving Assumption~\ref{ass:exponential_heta} exists.
However, since the minimax risk is of constant order, it suggests that an extra assumption is necessary for consistency of any estimator.
This phenomena occurs due to the behavior of the regression function around the parameter $\beta$.
The discussion above highlights the fundamental difference between the two frameworks considered in this work and motivates the extra Assumption~\ref{ass:sparsity_struct}, which might seem to be unnatural at the first sight.

Using the assumptions introduced for this model we can state the following result.
\begin{theorem}
    \label{thm:main_mistakes}
    Assume that the estimator $\heta$ satisfies Assumption~\ref{ass:exponential_heta}.
    Therefore, under Assumptions~\ref{ass:separability}--\ref{ass:sparsity_struct}, the plug-in rule in Eq.~\eqref{eq:plug-in_rule} satisfies
    \[
        \Expdata\big[\risk(\estimator)\big] - \risk(\oracle) \leq \tilde{C}(\beta^{\alpha_2} + S)\frac{(L - \beta)}{L}N^{-\gamma(\alpha_2 \wedge \alpha_1)/2}\enspace,
    \]
    for some universal constant $\tilde{C}$.
\end{theorem}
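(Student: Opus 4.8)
The plan is to follow the blueprint of Theorem~\ref{main:sparse}, separating two genuinely different error sources: the estimated ordering $\hsigma$ failing to identify the top labels, and the estimated threshold $\htop(X)$ differing from the oracle threshold $K(X)$ of Lemma~\ref{lem:oracle_mistakes}. First I would pass to a pointwise bound: since $\risk(f)=\tfrac1L\Exp_{\Prob_X}\sum_{l=1}^L\ind{f^l(X)=0}\eta^l(X)$, writing $A(X)=\{l:\oracle^l(X)=1\}=\{\sigma_1,\dots,\sigma_{K(X)}\}$ and $\hat{A}(X)=\{l:\estimator^l(X)=1\}=\{\hsigma_1,\dots,\hsigma_{\htop(X)}\}$, one gets
\[
\risk(\estimator)-\risk(\oracle)=\tfrac1L\Exp_{\Prob_X}\Big[\sum_{l\in A(X)\setminus\hat{A}(X)}\eta^l(X)-\sum_{l\in\hat{A}(X)\setminus A(X)}\eta^l(X)\Big]\le\tfrac1L\Exp_{\Prob_X}\sum_{l\in A(X)\setminus\hat{A}(X)}\eta^l(X),
\]
the last step because predicting extra labels as relevant only decreases the false-negative risk. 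A label $\sigma_i$ with $i\le K(X)$ lies in $A(X)\setminus\hat{A}(X)$ only if $\sigma_i\notin\{\hsigma_1,\dots,\hsigma_{K(X)}\}$ (the estimated and true top-$K(X)$ sets differ: ordering error) or $\htop(X)<K(X)$ (threshold error), whence
\[
\sum_{l\in A(X)\setminus\hat{A}(X)}\eta^l(X)\le\sum_{i=1}^{K(X)}\eta^{\sigma_i}(X)\,\ind{\sigma_i\notin\{\hsigma_1,\dots,\hsigma_{K(X)}\}}+\ind{\htop(X)<K(X)}\sum_{i=1}^{K(X)}\eta^{\sigma_i}(X),
\]
and in both summands Assumption~\ref{ass:sparsity_general} gives $\sum_{i\le K(X)}\eta^{\sigma_i}(X)\le\sum_{l=1}^L\eta^l(X)\le S$ almost surely.

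For the ordering error I would argue as in the top-$K$ analysis but with the random cut $k=K(X)$ (note $K(X)\ge\beta$ always, since $\sum_{l=1}^\beta(1-\eta^{\sigma_l})\le\beta$ trivially). On $\{K(X)=k\}$ the top-$k$ set of $\heta(X)$ differs from that of $\eta(X)$ only if the gap $g_k(X):=\eta^{\sigma_k}(X)-\eta^{\sigma_{k+1}}(X)$ is dominated by $2\max_l|\heta^l(X)-\eta^l(X)|$; by Assumption~\ref{ass:exponential_heta} the conditional-on-$X$ probability of this is at most a constant times $\exp(-cN^{\gamma}g_k(X)^2)$, and a layer-cake integration against the local margin Assumption~\ref{ass:separability} (if $\Prob_X\{g_k(X)\le\delta,\,K(X)=k\}\le C_0\delta^{\alpha_1}$ then $\Exp_{\Prob_X}[\ind{K(X)=k}e^{-cN^{\gamma}g_k(X)^2}]$ is at most a constant times $(cN^{\gamma})^{-\alpha_1/2}$) produces an $N^{-\gamma\alpha_1/2}$ factor. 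Multiplying by the bound $S$ on the $\eta$-mass of the displaced labels and summing over $k\in\{\beta,\dots,L\}$ yields a contribution of order $S\,\tfrac{L-\beta}{L}\,N^{-\gamma\alpha_1/2}$.

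The threshold error is the crux. For each $m$ the quantity $\sum_{l=1}^m(1-\heta^{\hsigma_l}(x))$ equals $\min_{|S|=m}\sum_{i\in S}(1-\heta^i(x))$ and is therefore a $1$-Lipschitz function of $\heta(x)$ for the $\ell_1$-norm, so the event $\{\htop(X)<K(X)=k\}$ forces a window sum $\tfrac1l\sum_{j=k-l+1}^k(1-\eta^{\sigma_j}(X))$ — with $l$ determined by $K(X)-\htop(X)$, in the extreme $l=k$ — to lie within the estimation error of $\beta$. Using Assumption~\ref{ass:exponential_heta} for the concentration of this partial-sum error and integrating against the global margin Assumption~\ref{ass:sparsity_struct}, whose normalization $\tfrac1l$ and prefactor $\beta^{\alpha_2}$ are calibrated so that the powers of $k$ (and $l$) introduced by the union bounds are absorbed, produces an $N^{-\gamma\alpha_2/2}$ factor weighted by $h(|k-l|)$. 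Combining with the $S$-bound on the $\eta$-mass of the affected block of labels, summing over the pairs $(k,l)$ and invoking $\sum_{k=\beta}^L\sum_{l=1}^L h(|k-l|)\le\tilde C(L-\beta)$ collapses the double sum into a single factor $(L-\beta)$, giving a contribution of order $(\beta^{\alpha_2}+S)\tfrac{L-\beta}{L}N^{-\gamma\alpha_2/2}$; adding the two contributions and using $N^{-\gamma\alpha_1/2}+N^{-\gamma\alpha_2/2}\le2N^{-\gamma(\alpha_1\wedge\alpha_2)/2}$ gives the claimed bound. The difficulty, in contrast to Theorem~\ref{main:sparse}, is that $K(X)$ is a genuine distribution-dependent random variable: a small perturbation of $\heta$ can push $\htop(X)$ well below $K(X)$, and each of the $K(X)-\htop(X)$ mispredicted labels then costs an $O(1)$ amount rather than a small margin gap — this is why the faster $N^{-\gamma(\alpha_2+1)/2}$-type rate is unavailable, why the structural Assumption~\ref{ass:sparsity_struct} with its summable weight $h$ is needed to make the summation over thresholds converge, and why the sparsity level $S$ is needed to control the $\eta$-mass of an arbitrarily long block of labels.
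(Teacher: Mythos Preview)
Your overall plan --- drop the negative term, separate an ``ordering'' error from a ``threshold'' error, and handle the first with Assumption~\ref{ass:separability} and the second with Assumption~\ref{ass:sparsity_struct} --- is close to the paper's, and your ordering-error part is essentially the paper's term $U_1$. The gap is in the threshold step. You bound the threshold contribution by $S\cdot\ind{\htop(X)<K(X)}$ and invoke the $1$-Lipschitz property of $m\mapsto\min_{|T|=m}\sum_{i\in T}(1-\heta^i)$. That argument only yields the \emph{prefix} condition $\tfrac{1}{k}\bigl|\sum_{j=1}^{k}(1-\eta^{\sigma_j}(X))-\beta\bigr|\le\|\eta(X)-\heta(X)\|_\infty$ on $\{K(X)=k\}$, i.e.\ the single case $l=k$ of Assumption~\ref{ass:sparsity_struct}. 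The general windows $\sum_{j=k-l+1}^{k}(1-\eta^{\sigma_j})$ with varying $l$ --- and hence the weights $h(|k-l|)$ whose summability you invoke --- do not come out of this argument; the phrase ``$l$ determined by $K(X)-\htop(X)$'' does not lead to that window. Carried through as written, your method gives a prefactor $S\beta^{\alpha_2}$ (a product) for the threshold term, not the claimed sum $\beta^{\alpha_2}+S$.

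The paper's device for avoiding the extra factor of $S$ is precisely \emph{not} to collapse the mispredicted labels into a single indicator. It splits on the good-ordering event $\{2\|\eta(X)-\heta(X)\|_\infty<\eta^{\sigma_k}(X)-\eta^{\sigma_{k+1}}(X)\}$ and, on that event, uses Lemma~\ref{prop:partial_ordering_of_estimator} to obtain $\{\hsigma_1,\dots,\hsigma_k\}=\{\sigma_1,\dots,\sigma_k\}$; then each missed label $\sigma_l$ with $\estimator^{\sigma_l}(X)=0$ is treated \emph{separately}, bounding $\eta^{\sigma_l}\le 1$ rather than invoking $S$. If $\hsigma_{l'}=\sigma_l$ (so $l'\le k$), the plug-in definition gives $\sum_{j\le l'}(1-\heta^{\hsigma_j})>\beta$, and because $\{\hsigma_1,\dots,\hsigma_{l'}\}\subset\{\sigma_1,\dots,\sigma_k\}$ one may upper-bound $\sum_{j\le l'}(1-\eta^{\hsigma_j})$ by the sum of the $l'$ \emph{largest} among $(1-\eta^{\sigma_j})_{j\le k}$, which is exactly the window $\sum_{j=k-l'+1}^{k}(1-\eta^{\sigma_j})\le\beta$. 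This is where the window form of Assumption~\ref{ass:sparsity_struct} genuinely enters; after reindexing $l\mapsto l'$ (a bijection on $[k]$ under good ordering) and peeling, the summability $\sum_{k,l}h(|k-l|)\le\tilde{C}(L-\beta)$ collapses the double sum and delivers $\beta^{\alpha_2}\tfrac{L-\beta}{L}N^{-\gamma\alpha_2/2}$ with no $S$ attached. Your Lipschitz shortcut cannot reproduce this step because, without conditioning on good ordering, there is no control over which indices actually appear in the $\heta$-prefix sum.
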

The proof of the previous theorem relies on the following Lemma:
\begin{lemma}[Partial order]\label{prop:partial_ordering_of_estimator}
    On the event $\{2\norm{\eta(X) - \heta(X)}_\infty < \eta^{\sigma_k}(X) - \eta^{\sigma_{k + 1}}(X)\}$, we have for all $l \in [L]$ and all $m \in [L]$ such that $l \leq k < m$:
    \[
        l' \leq k < m'\enspace,
    \]
    where $l'$ and $m'$ are such that $\hsigma_{l'} = \sigma_l$ and $\hsigma_{m'} = \sigma_{m}$.
\end{lemma}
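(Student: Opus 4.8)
The plan is to prove the statement pointwise in $X$ on the indicated event; throughout, every quantity is evaluated at this fixed $X$. First I would abbreviate $\varepsilon \eqdef \norm{\eta(X) - \heta(X)}_\infty$, so that $|\heta^j(X) - \eta^j(X)| \le \varepsilon$ for every label $j \in [L]$ and the defining event reads $\eta^{\sigma_k}(X) - \eta^{\sigma_{k+1}}(X) > 2\varepsilon$. Recall that $l'$ and $m'$ denote the positions of the labels $\sigma_l$ and $\sigma_m$ in the non-increasing $\heta$-ordering $\heta^{\hsigma_1}(X) \ge \dots \ge \heta^{\hsigma_L}(X)$, so the goal is to show $l' \le k$ whenever $l \le k$, and $m' > k$ whenever $m > k$.

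The key step is a separation estimate, for the perturbed scores, between the two index blocks $\{\sigma_1,\dots,\sigma_k\}$ and $\{\sigma_{k+1},\dots,\sigma_L\}$. For any $l \le k$ and any $m > k$, monotonicity of $\sigma$ gives $\eta^{\sigma_l}(X) \ge \eta^{\sigma_k}(X)$ and $\eta^{\sigma_m}(X) \le \eta^{\sigma_{k+1}}(X)$, hence
\[
  \heta^{\sigma_l}(X) - \heta^{\sigma_m}(X) \;\ge\; \bigl(\eta^{\sigma_l}(X) - \varepsilon\bigr) - \bigl(\eta^{\sigma_m}(X) + \varepsilon\bigr) \;\ge\; \eta^{\sigma_k}(X) - \eta^{\sigma_{k+1}}(X) - 2\varepsilon \;>\; 0 .
\]
Thus, on the event, every label in $\{\sigma_1,\dots,\sigma_k\}$ has a strictly larger $\heta$-value than every label in $\{\sigma_{k+1},\dots,\sigma_L\}$.

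It remains to read this off as rank inequalities. Fix $l \le k$. By the display, each of the $L-k$ labels $\sigma_{k+1},\dots,\sigma_L$ has $\heta$-value strictly smaller than $\heta^{\sigma_l}(X)$, hence in the non-increasing order $\hsigma$ each of them occupies a position strictly larger than $l'$; since these are $L-k$ distinct positions among $\{l'+1,\dots,L\}$, we get $L - l' \ge L - k$, i.e. $l' \le k$. Symmetrically, for $m > k$ each of the $k$ labels $\sigma_1,\dots,\sigma_k$ has $\heta$-value strictly larger than $\heta^{\sigma_m}(X)$ and therefore occupies a position strictly smaller than $m'$ in $\hsigma$, giving $m' \ge k+1$. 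Together this is exactly $l' \le k < m'$.

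There is no serious obstacle here; the only point deserving a line of care is that ties among the $\heta$-values leave $\hsigma$ non-unique. This is harmless because the two blocks are separated by a \emph{strict} inequality, so every admissible $\hsigma$ lists all of $\{\sigma_1,\dots,\sigma_k\}$ before all of $\{\sigma_{k+1},\dots,\sigma_L\}$, and the conclusion $l' \le k < m'$ does not depend on how within-block ties are broken.
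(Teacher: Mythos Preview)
Your proof is correct and follows essentially the same approach as the paper: both establish the strict separation $\heta^{\sigma_l}(X) > \heta^{\sigma_m}(X)$ for every pair $l \le k < m$ via the bound $\eta^{\sigma_l}(X) - \eta^{\sigma_m}(X) \ge \eta^{\sigma_k}(X) - \eta^{\sigma_{k+1}}(X) > 2\varepsilon$, and then finish with a counting argument to locate $l'$ and $m'$ on the correct sides of $k$. Your presentation is slightly more streamlined (and your remark on tie-breaking is a nice addition), but the substance is the same.
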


From the previous result we can conclude that the condition of the lemma yields $\{\hsigma_1, \ldots, \hsigma_k\} = \{\sigma_1, \ldots, \sigma_k\}$.
To see this it is sufficient to apply Lemma~\ref{prop:partial_ordering_of_estimator} to each $l = 1, \ldots, k$ and $m = k+1$ and use the fact that $l'$ defined in Lemma~\ref{prop:partial_ordering_of_estimator} is unique and is different for all $l$.
Similarly we can show that $\{\hsigma_{k+1}, \ldots, \hsigma_{L}\} = \{\sigma_{k+1}, \ldots, \sigma_{L}\}$.
Hence, the previous lemma gives an intuitive result: if the estimation $\heta$ is accurate enough, then it partially preserves the ordering of $\eta$.
We again point out the weak dependence of the obtained bound on the total amount of labels $L$, since $\beta$ and $S$ are expected to be small compared to $L$.
% \acks{We would like to acknowledge support for this project
% from the National Science Foundation (NSF grant IIS-9988642)
% and the Multidisciplinary Research Program of the Department
% of Defense (MURI N00014-00-1-0637). }

% Manual newpage inserted to improve layout of sample file - not
% needed in general before appendices/bibliography.
%%%%%%%%%%%%%%%%%%%%%%%%%%%%%%%%%%%%%%%%%%%%%%%%%%%%%%%%%%%%%%%%%%%%%%%%%%%%%%%
%%%%%%%%%%%%%%%%%%%%%%%%%%%%%%%%%%%%%%%%%%%%%%%%%%%%%%%%%%%%%%%%%%%%%%%%%%%%%%%
\section{Discussions and conclusions}
\label{sec:discussion_extensions}
%%%%%%%%%%%%%%%%%%%%%%%%%%%%%%%%%%%%%%%%%%%%%%%%%%%%%%%%%%%%%%%%%%%%%%%%%%%%%%%
%%%%%%%%%%%%%%%%%%%%%%%%%%%%%%%%%%%%%%%%%%%%%%%%%%%%%%%%%%%%%%%%%%%%%%%%%%%%%%%

The bound in Theorem~\ref{main:sparse} is similar to the bound obtained by \citet{Audibert_Tsybakov07} in the binary classification settings and is known to be minimax optimal in this case.
It is important to notice that this bound is independent from the total amount of labels $L$ and only depends on the parameter $K$.
However, we expect that this dependency can be improved, and rates proportional to $K/L$ can be achieved.
This intuition is explained by the first step of the proof of Theorem~\ref{main:sparse}, where a rather loose inequality is used.

Theorem~\ref{thm:main_mistakes} is proven under three different assumptions, which are reflecting the structure of the regression vector $\eta$.
The obtained bound does not have a classical $\gamma(\alpha+1)/2$ rate but $\gamma\alpha/2$ is obtained instead.
A simple explanation for this phenomena can be provided: in case the constant $\alpha_2$ from Assumption~\ref{ass:sparsity_struct} is equal to zero, the upper-bound becomes trivial, it is not surprising in view of the discussion provided after Assumption~\ref{ass:sparsity_struct}.
Indeed, the distributions satisfying Assumption~\ref{ass:sparsity_struct} with $\alpha_2 = 0$ are the same as the distributions satisfying \emph{only} Assumptions~\ref{ass:separability},~\ref{ass:sparsity_general} which is not sufficient for upper-bounding the minimax risk.
Bound of a similar type can be found in~\citep{Denis_Hebiri15} in the case of binary classification with reject option, where the authors are proposing to control the probability of rejection.
We expect that the control over a random quantity, that is the false positive discovery in our case, or the probability of reject in case of~\citep{Denis_Hebiri15}, might lead to such rates.
We plan to further investigate the behavior obtained in this case and provide minimax lower bounds to show their optimality.

The proposed framework is flexible and could be further analyzed.
In particular, it is interesting to find other strategies, \ie other sets $\classgeneric$ that can be of practical interest.
In general, we suggest to incorporate any quantity of interest in the set $\classgeneric$ and consider the plug-in approach if the oracle is available explicitly.
Notice, that the $\classtop{K}$-oracle does not allow to have an optimal control over false positive discoveries, whereas the $\classbeta{\beta}$-oracle does not allow to control sparsity.
For instance, one might be interested in both sparsity and false positive discoveries simultaneously, to this end a natural extension is the following set $\classmixer{\beta}{K}$:
\begin{align*}
    \classmixer{\beta}{K} = \classtop{K} \cap \classbeta{\beta}\enspace.
\end{align*}
Each classifier in this set has a controlled number of false positive errors as well as bounded sparsity.
Moreover, Remark~\ref{rem:on_fp_sp_inclusion} suggests to choose the value $\beta$ as $\rho K$, where $\rho \in (0, 1)$.
This choice of parameters bounds the output sparsity from below on the level $\rho K$ and from above on the level $K$, moreover the false positives discoveries are upper-bounded by $\rho K$.
It is not hard to show, that an $\classmixer{\beta}{K}$-oracle over such set can be obtained in a similar fashion, that is:
\begin{align*}
      \oracle^{\sigma_1}(x) &= \ldots = \oracle^{\sigma_{K_\star}}(x) = 1\enspace,\\
      \oracle^{\sigma_{K_\star+1}}(x) &= \ldots = \oracle^{\sigma_L}(x) = 0\enspace,
\end{align*}
where $K_{\star} = K_\star(x)$ is defined as
\begin{align*}
    K_\star(x) = \max\big\{m \in [L] : \sum\limits_{l = 1}^m(1 - \eta^{\sigma_l}(x)) \leq \beta\big\}\wedge K\enspace.
\end{align*}
The explicit expression of the $\classmixer{\beta}{K}$-oracle allows to use plug-in approach as before, and we plan to investigate this strategy in future works.

% %%%%%%%%%%%%%%%%%%%%%%%%%%%%%%%%%%%%%%%%%%%%%%%%%%%%%%%%%%%%%%%%%%%%%%%%%%%%%%%
% %%%%%%%%%%%%%%%%%%%%%%%%%%%%%%%%%%%%%%%%%%%%%%%%%%%%%%%%%%%%%%%%%%%%%%%%%%%%%%%
% \section{Conclusion}
% \label{sec:concl_future}
% %%%%%%%%%%%%%%%%%%%%%%%%%%%%%%%%%%%%%%%%%%%%%%%%%%%%%%%%%%%%%%%%%%%%%%%%%%%%%%%
% %%%%%%%%%%%%%%%%%%%%%%%%%%%%%%%%%%%%%%%%%%%%%%%%%%%%%%%%%%%%%%%%%%%%%%%%%%%%%%%

We presented a generic framework for multi-label classification, which consists in the minimization of the amount of false negative discoveries over a set of classifiers.
We provided non-asymptotic excess risk bounds for two particular instances of the proposed framework.
% The first bound, given for the popular top-$K$ procedure, yields a rate known to be minimax optimal in a classical binary context, with weak dependency on the total amount of labels $L$.
% The second bound holds for classifiers with controlled false positive discoveries.
% The associated rate deteriorates when compared to the previous example in terms of $N$ and is still weakly depended on $L$.
As a future direction, we plan to provide a lower bound for the considered examples and further investigate on other possible strategies within the proposed framework.
An important direction is to study the optimality of the obtained rates with respect to the total amount of labels $L$.
\vskip 0.2in
\bibliographystyle{elsarticle-harv}
\bibliography{references_all}
\newpage

\appendix
%%%%%%%%%%%%%%%%%%%%%%%%%%%%%%%%%%%%%%%%%%%%%%%%%%%%%%%%%%%%%%%%%%%%%%%%%%%%%%%%%%%%%%%%%%%%%%%%%%%%%%%%%%%%%%%%%%%%%%%%%%%%%%%%%%%%%%%%%%%%%%%%%%%%%%%%%%%%%%
\section*{Technical lemmas}
%%%%%%%%%%%%%%%%%%%%%%%%%%%%%%%%%%%%%%%%%%%%%%%%%%%%%%%%%%%%%%%%%%%%%%%%%%%%%%%%%%%%%%%%%%%%%%%%%%%%%%%%%%%%%%%%%%%%%%%%%%%%%%%%%%%%%%%%%%%%%%%%%%%%%%%%%%%%%%
\label{app:theorem}
The following lemma is used throughout this work.
It ensures that if the estimate $\heta$ satisfies the exponential bound in Assumption~\ref{ass:exponential_heta}, hence the same bound holds if we replace $l \in [L]$ by $\sigma_j$ for every $j \in [L]$:
\begin{lemma}
    \label{eq:exponential_heta_order}
    Assume that $\heta$ satisfies the conditions in Assumption~\ref{ass:exponential_heta}, hence for all $j \in [L]$ we have
    \begin{align*}
      \Probdata\{|\eta^{\sigma_j}(x) - \heta^{\sigma_j}(x)| \geq \delta\} \leq C_1\exp(-C_2a_N \delta^2)\,\,\, \text{for almost every $x \in \bbR^D$ w.r.t. }\Prob_X\enspace.
    \end{align*}
\end{lemma}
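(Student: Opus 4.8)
The plan is to reduce the claim directly to Assumption~\ref{ass:exponential_heta}, exploiting the fact that — unlike the data-dependent permutations $\hsigma$ and $\tau$ — the permutation $\sigma = \sigma(x)$ that sorts $\eta(x)$ in non-increasing order depends only on the (deterministic) regression function $\eta$ and on $x$; it carries no dependence on the sample $\data$. Consequently, for each fixed $x \in \bbR^D$ the index $\sigma_j(x)$ is a fixed element of $[L]$, non-random with respect to $\Probdata$.

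Concretely, I would fix $x$ outside the $\Prob_X$-null set on which Assumption~\ref{ass:exponential_heta} may fail, and set $l := \sigma_j(x) \in [L]$. Then the event $\{|\eta^{\sigma_j}(x) - \heta^{\sigma_j}(x)| \geq \delta\}$ is literally the event $\{|\eta^{l}(x) - \heta^{l}(x)| \geq \delta\}$, so applying Assumption~\ref{ass:exponential_heta} with this particular $l$ yields
\begin{align*}
\Probdata\{|\eta^{\sigma_j}(x) - \heta^{\sigma_j}(x)| \geq \delta\}
&= \Probdata\{|\eta^{l}(x) - \heta^{l}(x)| \geq \delta\} \\
&\leq C_1 \exp(-C_2 N^{\gamma} \delta^2)\enspace,
\end{align*}
which is the asserted bound (with $a_N = N^{\gamma}$). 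If one prefers not to commit to a choice of $\sigma$ when $\eta(x)$ has ties, the same conclusion follows from a union bound: $\Probdata\{|\eta^{\sigma_j}(x) - \heta^{\sigma_j}(x)| \geq \delta\} \leq \sum_{l=1}^{L}\Probdata\{\sigma_j(x) = l,\ |\eta^{l}(x) - \heta^{l}(x)| \geq \delta\}$, and since for a.e. $x$ exactly one summand is nonzero — again because $\sigma_j(x)$ is non-random — the right-hand side collapses to the single term already bounded above.

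There is essentially no obstacle here; the one point worth stating explicitly is the distinction between the deterministic ranking permutation $\sigma$ and its data-driven estimates, which is precisely what lets the exponential concentration transfer verbatim from coordinates $l \in [L]$ to coordinates $\sigma_j$, $j \in [L]$. The lemma is then invoked downstream (in the proofs of Theorems~\ref{main:sparse} and~\ref{thm:main_mistakes}) to control the post-sorting deviations $|\eta^{\sigma_l}(x) - \heta^{\sigma_l}(x)|$.
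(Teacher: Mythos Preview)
Your proof is correct and matches the paper's own argument: the paper also uses the fact that $\sigma_j(x)$ is deterministic in $\Probdata$ and writes the disjunction $\Probdata\{|\eta^{\sigma_j}(x) - \heta^{\sigma_j}(x)| \geq \delta\} = \sum_{l=1}^L \ind{\sigma_j(x) = l}\,\Probdata\{|\eta^{l}(x) - \heta^{l}(x)| \geq \delta\}$, then applies Assumption~\ref{ass:exponential_heta} termwise. This is exactly your ``union bound'' variant, and your direct substitution $l := \sigma_j(x)$ is simply the same step without the summation notation.
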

\begin{proof}
    A standard disjunction yields:
    \begin{align*}
        \Probdata\{|\eta^{\sigma_j}(x) - \heta^{\sigma_j}(x)| \geq \delta\} &= \sum\limits_{l = 1}^L\Probdata\{|\eta^{\sigma_j(x)}(x) - \heta^{\sigma_j(x)}(x)| \geq \delta\} \ind{\sigma_j(x) = l}\\
        &=\sum\limits_{l = 1}^L\Probdata\{|\eta^{l}(x) - \heta^{l}(x)| \geq \delta\} \ind{\sigma_j(x) = l}\\
        &\leq \sum\limits_{l = 1}^L\ C_1\exp(-C_2a_N \delta^2)\ind{\sigma_j(x) = l} = C_1\exp(-C_2a_N \delta^2)\enspace,
    \end{align*}
    and the inequality in Lemma~\ref{eq:exponential_heta_order} holds for almost every $x \in \bbR^D$ with respect to $\Prob_X$.
\end{proof}
Similarly, we can obtain the following bound on the infinity norm of the regression function:
\begin{lemma}
    Assume that $\heta$ satisfies the conditions in Assumption~\ref{ass:exponential_heta}, hence we have
    \begin{align*}
      \Probdata\{\max_{l \in [L]}\{|\eta^{l}(x) - \heta^{l}(x)|\} \geq \delta\} \leq C_1\exp(-C_2a_N \delta^2)\,\,\, \text{for almost every $x \in \bbR^D$ w.r.t. }\Prob_X\enspace.
    \end{align*}
\end{lemma}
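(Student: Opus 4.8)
The plan is to derive this from Assumption~\ref{ass:exponential_heta} (equivalently, from Lemma~\ref{eq:exponential_heta_order}) by a union bound over the $L$ coordinates. Fix $\delta > 0$ and a point $x \in \bbR^D$. The starting point is the elementary set identity
\[
  \Big\{\max_{l \in [L]} |\eta^l(x) - \heta^l(x)| \geq \delta\Big\} = \bigcup_{l = 1}^L \big\{|\eta^l(x) - \heta^l(x)| \geq \delta\big\}\enspace,
\]
valid because a maximum of finitely many reals is at least $\delta$ if and only if at least one of them is. Applying subadditivity of $\Probdata$ and then Assumption~\ref{ass:exponential_heta} coordinatewise gives
\[
  \Probdata\Big\{\max_{l \in [L]} |\eta^l(x) - \heta^l(x)| \geq \delta\Big\} \leq \sum_{l = 1}^L \Probdata\big\{|\eta^l(x) - \heta^l(x)| \geq \delta\big\} \leq L\, C_1 \exp(-C_2 a_N \delta^2)\enspace,
\]
for every $x$ outside the $\Prob_X$-null set on which the coordinatewise bounds may fail; this exceptional set is a finite union of null sets and hence still $\Prob_X$-null, so the estimate holds for $\Prob_X$-almost every $x$.

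To match the displayed form of the statement one simply folds the extra factor $L$ into the constant in front of the exponential; this is harmless wherever the lemma is invoked, and indeed in Lemma~\ref{lem:embeding_of_plug-in_beta_class} the resulting $L$ is carried explicitly through as $LN^{-\gamma/2}$. A version genuinely free of the dimension $L$ is available only when a single coordinate is relevant, in which case Lemma~\ref{eq:exponential_heta_order} already suffices.

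There is essentially no obstacle here: the argument is Boole's inequality applied to the pointwise hypothesis. The only two points requiring a word of care are (i) propagating the ``almost every $x$'' qualifier through the finite union, which is immediate since a finite union of $\Prob_X$-null sets is $\Prob_X$-null, and (ii) being explicit about the factor $L$ produced by the union bound, which cannot be removed in general because Assumption~\ref{ass:exponential_heta} says nothing about the joint law of the coordinatewise errors $\big(|\eta^l(x) - \heta^l(x)|\big)_{l \in [L]}$.
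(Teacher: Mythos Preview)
Your argument is correct and is essentially what the paper intends: the paper gives no explicit proof for this lemma, only the remark ``Similarly, we can obtain the following bound'' pointing back to the disjunction argument of Lemma~\ref{eq:exponential_heta_order}. The natural reading of ``similarly'' is precisely the union bound you wrote down.

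You are in fact more careful than the paper on one point. In Lemma~\ref{eq:exponential_heta_order} the disjunction is over the mutually exclusive events $\{\sigma_j(x)=l\}$, so the indicators sum to $1$ and no factor $L$ appears. For the maximum, the coordinatewise events $\{|\eta^l(x)-\heta^l(x)|\geq\delta\}$ are not disjoint, so a genuine union bound yields the extra factor $L$ you exhibit. The paper's stated bound $C_1\exp(-C_2 a_N\delta^2)$ silently absorbs this $L$ into the leading constant (and indeed, in the proof of Theorem~\ref{thm:main_mistakes} the constants in front of the exponential are renamed anyway). Your remark that a dimension-free version is not available under Assumption~\ref{ass:exponential_heta} alone is accurate and worth keeping.
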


%%%%%%%%%%%%%%%%%%%%%%%%%%%%%%%%%%%%%%%%%%%%%%%%%%%%%%%%%%%%%%%%%%%%%%%%%%%%%%%%%%%%%%%%%%%%%%%%%%%%%%%%%%%%%%%%%%%%%%%%%%%%%%%%%%%%%%%%%%%%%%%%%%%%%%%%%%%%%%
\section*{Proof of Theorem~\ref{main:sparse}}
%%%%%%%%%%%%%%%%%%%%%%%%%%%%%%%%%%%%%%%%%%%%%%%%%%%%%%%%%%%%%%%%%%%%%%%%%%%%%%%%%%%%%%%%%%%%%%%%%%%%%%%%%%%%%%%%%%%%%%%%%%%%%%%%%%%%%%%%%%%%%%%%%%%%%%%%%%%%%%

\begin{proof}
    We start with the following decomposition of the excess risk:
    \begin{align*}
        \excess(\estimator) &= \frac{1}{L}\Expdata\Exp_{\Prob_X}\Big[\sum\limits_{l = 1}^L\eta^{\sigma_l}(X)\big(\ind{\estimator^{\sigma_l}(X) = 0} - \ind{\oracle^{\sigma_l}(X) = 0}\big)\Big]\\
        &= \frac{1}{L}\Expdata\Exp_{\Prob_X}\Big[\sum\limits_{l = 1}^L\eta^{\sigma_l}(X)\ind{\estimator^{\sigma_l}(X) = 0, \oracle^{\sigma_l}(X) = 1} - \sum\limits_{l = 1}^L\eta^{\sigma_l}(X)\ind{\estimator^{\sigma_l}(X) = 1, \oracle^{\sigma_l}(X) = 0}\Big]\\
        &=\frac{1}{L}\Expdata\Exp_{\Prob_X}\Big[\sum\limits_{l = 1}^K\eta^{\sigma_l}(X)\ind{\estimator^{\sigma_l}(X) = 0} - \sum\limits_{l = K + 1}^L\eta^{\sigma_l}(X)\ind{\estimator^{\sigma_l}(X) = 1}\Big]\enspace,
    \end{align*}
    where in the last equality we have used the explicit expression for the oracle from Lemma~\ref{lem:oracle_sparse}.
    Now notice that since $\estimator$ is exactly $K$-sparse, hence if in the first sum there are $m \in \{1, \ldots, K\}$ non-zero terms, hence there are exactly $m$ non-zero terms in the second sum.
    Since all the non-zero terms in the first sum are greater than all the non-zero terms in the second sum, we can bound the excess risk by all possible pair-wise differences:
    \begin{align*}
        \excess(\estimator) &\leq \Expdata\Exp_{\Prob_X}\frac{1}{L}\sum\limits_{l=1}^K\sum\limits_{j = K+1}^{L}(\eta^{\sigma_l}(X) - \eta^{\sigma_j}(X))\ind{\estimator^{\sigma_l}(X) = 0, \estimator^{\sigma_j}(X) = 1}\enspace.
    \end{align*}
    On the one hand, according to the plug-in rule definition, on the event $\{\estimator^{\sigma_l}(X) = 0, \estimator^{\sigma_j}(X) = 1\}$ we have $\heta^{\sigma_j}(X) \geq \heta^{\sigma_l}(X)$.
    On the other hand, due to the definition of $\sigma$ we have $\eta^{\sigma_l}(X) \geq \eta^{\sigma_j}(X)$ for all $j > l$.
    Therefore, on the event $\{\estimator^{\sigma_l}(X) = 0, \estimator^{\sigma_j}(X) = 1\}$ we have $\eta^{\sigma_l}(X) - \eta^{\sigma_j}(X) \leq |\hat\Delta_l| + |\hat\Delta_j|$, where $\hat\Delta_k=\heta^{\sigma_k}(X) - \eta^{\sigma_k}(X)$ for any $k \in [L]$.
    We can write:
    \begin{align*}
        \excess(\estimator) &\leq \frac{1}{L}\Expdata\Exp_{\Prob_X}\sum\limits_{l=1}^K\sum\limits_{j = K+1}^{L}(\eta^{\sigma_l}(X) - \eta^{\sigma_j}(X))\ind{\eta^{\sigma_l}(X) - \eta^{\sigma_j}(X) \leq |\hat\Delta_k| + |\hat\Delta_j|}\enspace.
    \end{align*}
    Denote by $T_{l,j}(X)$ for all $l \in \{1, \ldots, K\}$ and $j \in \{K+1, \ldots, L\}$ the $(l,j)$-term in the above sum, that is:
    \begin{align*}
        \excess(\estimator) &\leq \frac{1}{L}\sum\limits_{l=1}^K\sum\limits_{j = K+1}^{L}\Expdata\Exp_{\Prob_X}T_{l, j}(X)\enspace.
    \end{align*}
    Now we restrict our attention on an arbitrary $T_{l, j}(X)$, we can write
    \begin{align*}
      \Expdata\Exp_{\Prob_X}T_{l, j}(X) &= \Expdata\Exp_{\Prob_X}(\eta^{\sigma_l}(X) - \eta^{\sigma_j}(X))\ind{\eta^{\sigma_l}(X) - \eta^{\sigma_j}(X) \leq |\hat\Delta_l| + |\hat\Delta_j|}\\
      &= \sum\limits_{p \geq 0}\Expdata\Exp_{\Prob_X}(\eta^{\sigma_l}(X) - \eta^{\sigma_j}(X))\ind{\eta^{\sigma_l}(X) - \eta^{\sigma_j}(X) \leq |\hat\Delta_l| + |\hat\Delta_j|}\ind{X \in A_p}\enspace,
    \end{align*}
    where $A_p$ are defined similar to~\citep{Audibert_Tsybakov07}, that is
    \begin{align*}
        A_0 &= \{x \in \bbR^D: 0 < \eta^{\sigma_l}(x) - \eta^{\sigma_j}(x) \leq \delta\}\enspace,\\
        A_p &= \{x \in \bbR^D: 2^{p - 1} < \eta^{\sigma_l}(x) - \eta^{\sigma_j}(X) \leq 2^p\delta\}\text{ for all $p > 0$}\enspace.
    \end{align*}
    We continue as:
    \begin{align*}
      \Expdata&\Exp_{\Prob_X}T_{l, j}(X) = \Expdata\Exp_{\Prob_X}(\eta^{\sigma_l}(X) - \eta^{\sigma_j}(X))\ind{\eta^{\sigma_l}(X) - \eta^{\sigma_j}(X) \leq |\hat\Delta_l| + |\hat\Delta_j|}\ind{X \in A_0}\\
      &+ \sum\limits_{p \geq 1}\Expdata\Exp_{\Prob_X}(\eta^{\sigma_l}(X) - \eta^{\sigma_j}(X))\ind{\eta^{\sigma_l}(X) - \eta^{\sigma_j}(X) \leq |\hat\Delta_l| + |\hat\Delta_j|}\ind{X \in A_p}\\
      \leq &\Expdata\Exp_{\Prob_X}(\eta^{\sigma_l}(X) - \eta^{\sigma_j}(X))\ind{0 < \eta^{\sigma_l}(X) - \eta^{\sigma_j}(X) \leq \delta\}}\\
      &+\sum\limits_{p \geq 1}\Expdata\Exp_{\Prob_X}(\eta^{\sigma_l}(X) - \eta^{\sigma_j}(X))\ind{2^{p - 1}\delta \leq |\hat\Delta_l| + |\hat\Delta_j|}\ind{0 < \eta^{\sigma_l}(X) - \eta^{\sigma_j}(X) \leq 2^p\delta}\\
      \leq &\delta\Exp_{\Prob_X}\ind{0 < \eta^{\sigma_l}(X) - \eta^{\sigma_j}(X) \leq \delta} \\
      &+ \sum\limits_{p \geq 1}2^p\delta\Exp_{\Prob_X}\Probdata\{2^{p - 1}\delta \leq |\hat\Delta_l| + |\hat\Delta_j|\}\ind{0 < \eta^{\sigma_l}(X) - \eta^{\sigma_j}(X) \leq 2^p\delta} \\
      \leq &\delta\Exp_{\Prob_X}\ind{0 < \eta^{\sigma_{K}}(X) - \eta^{\sigma_{K+1}}(X) \leq \delta} \\
      &+ \sum\limits_{p \geq 1}2^p\delta\Exp_{\Prob_X}\Probdata\{2^{p - 1}\delta \leq |\hat\Delta_l| + |\hat\Delta_j|\}\ind{0 < \eta^{\sigma_{K}}(X) - \eta^{\sigma_{K+1}}(X) \leq 2^p\delta} \\
      \leq &C\delta^{1+\alpha} + 2\sum\limits_{p \geq 1}2^p\delta C_1\exp(-C_2a_N 2^{2p - 2}\delta^2)\Exp_{\Prob_X}\ind{0 < \eta^{\sigma_{K}}(X) - \eta^{\sigma_{K+1}}(X) \leq 2^p\delta}\\
      \leq &C\delta^{1+\alpha} + 2CC_1 \delta^{\alpha + 1} \sum\limits_{p \geq 1}2^{p(\alpha+1)}\exp(-C_2a_N 2^{2p - 2}\delta^2)\enspace,
    \end{align*}
    setting $\delta = (a_N)^{\tfrac{1}{2}}$ we obtain:
    \begin{align*}
        \Expdata\Exp_{\Prob_X}T_{l, j}(X) \leq \tilde{C} (a_N)^{\tfrac{1 + \alpha}{2}}.
    \end{align*}
    We conclude by substituting the obtained bound into the excess risk bound.
\end{proof}
%%%%%%%%%%%%%%%%%%%%%%%%%%%%%%%%%%%%%%%%%%%%%%%%%%%%%%%%%%%%%%%%%%%%%%%%%%%%%%%%%%%%%%%%%%%%%%%%%%%%%%%%%%%%%%%%%%%%%%%%%%%%%%%%%%%%%%%%%%%%%%%%%%%%%%%%%%%%%%
\section*{Proof of Lemma~\ref{lem:oracle_mistakes}}
%%%%%%%%%%%%%%%%%%%%%%%%%%%%%%%%%%%%%%%%%%%%%%%%%%%%%%%%%%%%%%%%%%%%%%%%%%%%%%%%%%%%%%%%%%%%%%%%%%%%%%%%%%%%%%%%%%%%%%%%%%%%%%%%%%%%%%%%%%%%%%%%%%%%%%%%%%%%%%
\begin{lemma}
    \label{claim:pointwise_top_oracle}
    Assume that $f \in \mathcal{F}_\beta$.
    Let $\pos(f(x)) = \{l \in [L] : f^l(x) = 1\}$ and denote by $m(x) = |\pos(f(x))|$ the cardinality of $\pos(f(x))$.
    For all $x\in \bbR^D$ define $f_m$ as
    \begin{align*}
        f^{\sigma_1}_m(x) &= \ldots = f^{\sigma_m}_m(x) = 1\enspace,\\
        f^{\sigma_{m+1}}_m(x) &= \ldots = f^{\sigma_L}_m(x) = 0\enspace,
    \end{align*}
    Hence, $f_m \in \classbeta\beta$ and $\risk(f_m) \leq \risk(f)$.
\end{lemma}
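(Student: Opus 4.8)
The plan is to reduce the claim to a pointwise rearrangement inequality applied \emph{simultaneously} to the objective and to the constraint. First I would condition on $X$: since $\Prob\{f^l(X)=0,Y^l=1\}=\Exp_{\Prob_X}\bigl[\ind{f^l(X)=0}\,\eta^l(X)\bigr]$, the risk can be written as
\[
  \risk(f)=\frac1L\Exp_{\Prob_X}\Bigl[\sum_{l=1}^L\eta^l(X)-\sum_{l\in\pos(f(X))}\eta^l(X)\Bigr]\enspace,
\]
while, for almost every $x$, the conditional false-positive sum entering the definition of $\classbeta{\beta}$ equals
\[
  \sum_{l=1}^L\Prob\{f^l(x)=1,Y^l=0\mid X=x\}=\sum_{l\in\pos(f(x))}(1-\eta^l(x))=m(x)-\sum_{l\in\pos(f(x))}\eta^l(x)\enspace.
\]
Thus both the risk contribution at $x$ and the false-positive budget at $x$ depend on $f$ only through the cardinality $m(x)=|\pos(f(x))|$ and the mass $\sum_{l\in\pos(f(x))}\eta^l(x)$, and both are non-increasing in that mass.

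Next I would invoke the elementary fact that, for fixed $x$ and fixed cardinality $m$, the maximum of $\sum_{l\in A}\eta^l(x)$ over subsets $A\subseteq[L]$ with $|A|=m$ is attained at $A=\{\sigma_1,\dots,\sigma_m\}$; this is immediate from the definition of $\sigma$ as a permutation sorting $\eta(x)$ in non-increasing order. Applying this with $m=m(x)$, and noting that by construction $\pos(f_m(x))=\{\sigma_1,\dots,\sigma_{m(x)}\}$ while $|\pos(f(x))|=m(x)$, gives
\[
  \sum_{l\in\pos(f(x))}\eta^l(x)\le\sum_{j=1}^{m(x)}\eta^{\sigma_j}(x)\qquad\text{for all }x\in\bbR^D\enspace.
\]
Substituting this into the two displays of the previous paragraph yields, for every $x$, that the false-positive sum of $f_m$ at $x$ is at most that of $f$, hence at most $\beta$ since $f\in\classbeta{\beta}$ — so $f_m\in\classbeta{\beta}$ — and that the risk integrand of $f_m$ at $x$ is at most that of $f$; integrating the latter over $\Prob_X$ gives $\risk(f_m)\le\risk(f)$.

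The only technical point I would need to address is that $f_m$ is a genuine classifier: $m(\cdot)=\norm{f(\cdot)}_0$ is measurable because $f$ is, and $\sigma(\cdot)$ can be chosen as a measurable selection of the ranking permutation, so $f_m$ is a measurable map $\spacefeature\to\binvec$. Moreover, ties among the components of $\eta(x)$ are harmless, since the displayed inequality uses only that the selected $m(x)$ labels realize the $m(x)$ largest values of $\eta(x)$, which holds for any admissible $\sigma$. I do not expect a real obstacle here — the core of the argument is a one-line greedy/rearrangement estimate that bounds both the objective and the constraint at once; the measurable-selection remark is the only place where a little care is warranted.
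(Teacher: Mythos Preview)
Your proposal is correct and follows essentially the same approach as the paper: both arguments reduce to the pointwise observation that, for fixed cardinality $m(x)$, the subset $\{\sigma_1,\dots,\sigma_{m(x)}\}$ maximizes $\sum_{l\in A}\eta^l(x)$ (equivalently minimizes $\sum_{l\in A}(1-\eta^l(x))$ and $\sum_{l\notin A}\eta^l(x)$), which simultaneously handles the constraint and the risk. Your presentation is slightly more unified in that you make explicit that both the false-positive budget and the risk integrand depend on $f$ only through $m(x)$ and the common mass $\sum_{l\in\pos(f(x))}\eta^l(x)$, and you add a measurability remark for $f_m$ that the paper omits; neither of these constitutes a different route.
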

\begin{proof}
    First, we show that $f_m \in \classbeta\beta$, since $f \in \mathcal{F}_\beta$ it holds that
    \[
        \sum\limits_{l \in \pos(f(X))}(1 - \eta^l(X)) \leq \beta,\,\asx\enspace,
    \]
    due to the definition of $\sigma = \sigma(x)$ we have
    \[
       \sum\limits_{l \in \pos(f_m(x))}(1 - \eta^l(x)) \leq \sum\limits_{l \in \pos(f(x))}(1 - \eta^l(x)),\,\allx\enspace,
    \]
    indeed, $\sum_{l \in \pos(f_m(x))}^L(1 - \eta^l(x))$ consists of a sum of $m$ smallest values of $(1 - \eta^l(x))_{l=1}^L$, which concludes the first part of the statement.
    The second part is proven similarly: $\allx$ it obviously holds thanks to the definition of $\sigma$ that
    \begin{align*}
        \Exp\Big[\sum\limits_{l = 1}^L\ind{f^l_m(x) = 0, Y^l = 1}| X = x\Big] = \sum\limits_{l \in [L] \setminus \pos(f_m(x))}\eta^l(x) &\leq \sum\limits_{l \in [L] \setminus \pos(f(x))}\eta^l(x)\\ &= \Exp\Big[\sum\limits_{l = 1}^L\ind{f^l(x) = 0, Y^l = 1}| X = x\Big]\enspace,
    \end{align*}
    which concludes the proof.
\end{proof}

\begin{proof}[Lemma~\ref{lem:oracle_mistakes}]
    Let $f \in \classbeta\beta$ be an oracle. Due to Lemma~\ref{claim:pointwise_top_oracle}, we can get $f_m \in \classbeta\beta$ such that $\risk(f_m)\leq \risk(f)$, so $f_m$ is also an oracle.
    On the event $\{ x \in \bbR^D : \sum_{l = 1}^L\ind{f^l_m(x) = 1}(1 - \eta^l(x)) \leq \beta\}$ (whose measure is one), it holds that $\pos(f_m(x)) \subset \pos(\oracle(x))$ by the construction of $\oracle(x)$ and in particular $K(x)$ therefore on this set
    \begin{align*}
        \Exp\Big[\sum\limits_{l = 1}^L\ind{f^l_m(x) = 0, Y^l = 1}| X = x\Big] \leq \Exp\Big[\sum\limits_{l = 1}^L\ind{f^l_\beta(x) = 0, Y^l = 1}| X = x\Big]\enspace.
    \end{align*}
    Since, the previous inequality holds almost surely $\Prob_X$, we conclude.

    % It is clear that one can look for a $\beta$-oracle obtained via setting $f_\beta^l(x)$ at each given point $x \in \bbR^D$ equal to one if $l$ is belong to the largest values of $(\eta^l(x))_{l=1}^L$.
    % Moreover, if two classifiers $f \in \classbeta{\beta}$ and $f' \in \classbeta{\beta}$ are following the rule of the largest values, the classifier with bigger amount of ones is always not worse than the classifier with fewer amount of ones, due to the definition of the comparison in~\cref{def:oracle_order}.
\end{proof}

%%%%%%%%%%%%%%%%%%%%%%%%%%%%%%%%%%%%%%%%%%%%%%%%%%%%%%%%%%%%%%%%%%%%%%%%%%%%%%%%%%%%%%%%%%%%%%%%%%%%%%%%%%%%%%%%%%%%%%%%%%%%%%%%%%%%%%%%%%%%%%%%%%%%%%%%%%%%%%
\section*{Proof of Theorem~\ref{thm:main_mistakes}}
\label{app:proof_main_mistakes}
%%%%%%%%%%%%%%%%%%%%%%%%%%%%%%%%%%%%%%%%%%%%%%%%%%%%%%%%%%%%%%%%%%%%%%%%%%%%%%%%%%%%%%%%%%%%%%%%%%%%%%%%%%%%%%%%%%%%%%%%%%%%%%%%%%%%%%%%%%%%%%%%%%%%%%%%%%%%%%
% Note: in this sample, the section number is hard-coded in. Following
% proper LaTeX conventions, it should properly be coded as a reference:

%In this appendix we prove the following theorem from
%Section~\ref{sec:textree-generalization}:
\begin{proof}[Lemma~\ref{lem:embeding_of_plug-in_beta_class}]
    Let us fix $f \in \hclassbeta{\beta}$.
    Hence, by the definition of $\hclassbeta{\beta}$ we have
    \[
        \sum\limits_{l = 1}^L\ind{f^l(X) = 1}(1 - \heta^l(X))\leq \beta,\,\asx \enspace.
    \]
    We introduce the following notation
    \begin{align*}
      B(z) = \sum\limits_{l = 1}^L\ind{f^l(X) = 1}(1 - z^l)\enspace.
    \end{align*}
    Therefore $B(\heta(X))\leq \beta$, $\asx$. The following sequence of inequalities holds
    \begin{align*}
      \abs{B(\heta(X)) - B(\eta(X))} &=
      \abs{\sum\limits_{l = 1}^L\ind{f^l(X) = 1}(\heta^l(X) - \eta^l(X))}\\
      &\leq \sum\limits_{l = 1}^L\abs{(\heta^l(X) - \eta^l(X))},\,\asx\enspace,
    \end{align*}
    % where $\normin{\heta(X) - \eta(X)}_{\infty} = \max_{l \in [L]}|\heta^l(X) - \eta^l(X)|$ is the infinity norm in $\bbR^L$.
    % Therefore, we showed that
    % \begin{align}
    %  \sum\limits_{l = 1}^L\ind{f^l(X) = 1}(1 - \eta^l(X)) &= A(\eta(X)) \leq A(\heta(X)) + \abs{A(\heta(X)) - A(\eta(X))}\\ &\leq \beta + L\norm{\heta(X) - \eta(X)}_{\infty}\,\asx\enspace.
    % \end{align}
    and this concludes the proof.
\end{proof}
\begin{proof}[Lemma~\ref{prop:partial_ordering_of_estimator}]
    Let $l \in [L]$ and $m \in [L]$ be such that $l \leq k < m$, hence by of $\sigma$ we have
    \[
        \eta^{\sigma_l}(X) \geq \eta^{\sigma_k}(X) \geq \eta^{\sigma_{k + 1}}(X) \geq \eta^{\sigma_m}(X)\enspace,
    \]
    therefore
    \[
        \eta^{\sigma_l}(X) - \eta^{\sigma_m}(X) \geq \eta^{\sigma_k}(X) - \eta^{\sigma_{k+1}}(X)\enspace.
    \]
    We can write
    \[
       \eta^{\sigma_l}(X) - \eta^{\sigma_m}(X) - \heta^{\hsigma_{l'}}(X) + \heta^{\hsigma_{l'}}(X) - \heta^{\hsigma_{m'}}(X) + \heta^{\hsigma_{m'}}(x) = \eta^{\sigma_l}(X) - \eta^{\sigma_m}(x) \geq \eta^{\sigma_k}(X) - \eta^{\sigma_{k+1}}(X)\enspace,
    \]
    which implies
    \[
        \heta^{\hsigma_{l'}}(X) - \heta^{\hsigma_{m'}}(X) + 2\norm{\heta(X) - \eta(X)}_{\infty} \geq \eta^{\sigma_k}(X) - \eta^{\sigma_{k+1}}(X)\enspace,
    \]
    and therefore on the event $\{2\norm{\eta(X) - \heta(X)}_\infty < \eta^{\sigma_k}(X) - \eta^{\sigma_{k + 1}(X)}\}$ we have
    \[
        \heta^{\hsigma_{l'}}(X) \geq \heta^{\hsigma_{m'}}(X)\enspace,
    \]
    meaning that $l' < m'$.
    To conclude that $m' > k$ it is sufficient to notice that the inequality $l' < m'$ holds for at least $k$ different values of $l'$.
    Similarly we conclude that $l' \leq k$.
\end{proof}

\begin{proof}[Theorem~\ref{thm:main_mistakes}]
Here we denote by $\Exp$ the expectation $\Expdata\Exp_X$ for the sake of simplicity.
    \begin{align*}
        \Exp\risk(\estimator) - &\risk(\oracle) = \frac{1}{L}\Exp\big[\sum\limits_{l=1}^L\eta^{\sigma_l}(X)\ind{\estimator^{\sigma_l}(X) =0, \oracle^{\sigma_l}(X)=1} - \sum\limits_{l=1}^L\eta^{\sigma_l}(X)\ind{\estimator^{\sigma_l}(X) =1, \oracle^{\sigma_l}(X)=0}\big] \\
        \leq &\frac{1}{L}\Exp\big[\sum\limits_{l=1}^L\eta^{\sigma_l}(X)\ind{\estimator^{\sigma_l}(X) =0, \oracle^{\sigma_l}(X)=1}\sum\limits_{k = \beta}^L\ind{K(X) = k}\big]\\
        = &\underbrace{\frac{1}{L}\Exp\Big[\sum\limits_{k = \beta}^L\big[\sum\limits_{l=1}^k\eta^{\sigma_l}(X)\ind{\estimator^{\sigma_l}(X) = 0}\ind{K(X) = k}\big]\ind{2\norm{\eta(X) - \heta(X)}_\infty \geq \eta^{\sigma_k}(X) - \eta^{\sigma_{k + 1}(X)}}\Big]}_{U_1} \\
        + &\underbrace{\frac{1}{L}\Exp\Big[\sum\limits_{k = \beta}^L\big[\sum\limits_{l=1}^k\eta^{\sigma_l}(X)\ind{\estimator^{\sigma_l}(X) = 0}\ind{K(X) = k}\big]\ind{2\norm{\eta(X) - \heta(X)}_\infty < \eta^{\sigma_k}(X) - \eta^{\sigma_{k + 1}(X)}}\Big]}_{U_2} \\
        = & U_1 + U_2\enspace.
    \end{align*}
    For $U_1$, due to Assumption~\ref{ass:sparsity_general} we can write
    \begin{align*}
        U_1 \leq &\frac{1}{L}\Exp\Big[\sum\limits_{k = \beta}^L\big[\sum\limits_{l=1}^k\eta^{\sigma_l}(X)\ind{K(X) = k}\ind{2\norm{\eta(X) - \heta(X)}_\infty \geq \eta^{\sigma_k}(X) - \eta^{\sigma_{k + 1}(X)}}\big]\Big]\\
        = &\frac{1}{L}\Exp\Big[\sum\limits_{k = \beta}^L\ind{K(X) = k}\ind{2\norm{\eta(X) - \heta(X)}_\infty \geq \eta^{\sigma_k}(X) - \eta^{\sigma_{k + 1}(X)}}\sum\limits_{l=1}^k\eta^{\sigma_l}(X)\Big]\\
        \leq &\frac{S}{L}\sum\limits_{k = \beta}^L\underbrace{\Exp\Big[\ind{K(X) = k}\ind{2\norm{\eta(X) - \heta(X)}_\infty \geq \eta^{\sigma_k}(X) - \eta^{\sigma_{k + 1}(X)}}\Big]}_{U_1^k}
        = \frac{S}{L}\sum\limits_{k = \beta}^LU_1^k\enspace.
    \end{align*}
    define the following sets, similar to the analysis of~\citet{Audibert_Tsybakov07} for binary classification, for all $L \leq k \leq \beta$
    \begin{align*}
        A_0^k &= \{X \in \bbR^D :  \eta^{\sigma_k}(X) - \eta^{\sigma_{k + 1}(X)} \leq \delta\}\enspace,\\
        A_j^k &= \{X \in \bbR^D :  2^{j - 1}\delta < \eta^{\sigma_k}(X) - \eta^{\sigma_{k + 1}}(X) \leq 2^j\delta\}\enspace.
    \end{align*}
    Therefore, using Assumption~\ref{ass:separability} for each $k$ such that $L \leq k \leq \beta$ we have
    \begin{align*}
        U_1^k = &\Exp\sum\limits_{j \geq 0}\ind{K(X) = k}\ind{2\norm{\eta(X) - \heta(X)}_\infty \geq \eta^{\sigma_k}(X) - \eta^{\sigma_{k + 1}(X)}}\ind{X \in A_j^k}\\
        = & \Exp\ind{K(X) = k}\ind{2\norm{\eta(X) - \heta(X)}_\infty \geq \eta^{\sigma_k}(X) - \eta^{\sigma_{k + 1}(X)}}\ind{X \in A_0^k} \\
        &+ \Exp\sum\limits_{j \geq 1}\ind{K(X) = k}\ind{2\norm{\eta(X) - \heta(X)}_\infty \geq \eta^{\sigma_k}(X) - \eta^{\sigma_{k + 1}(X)}}\ind{X \in A_j^k} \\
        \leq &\Exp\ind{K(X) = k}\ind{X \in A_0^k} \\
        &+ \Exp\sum\limits_{j \geq 1}\ind{K(X) = k}\ind{2\norm{\eta(X) - \heta(X)}_\infty \geq 2^{j - 1}\delta}\ind{X \in A_j^k} \\
        \leq &\Prob\{0 < \eta^{\sigma_k}(X) - \eta^{\sigma_{k + 1}(X)} \leq \delta, K(X) = k\} \\
        &+\Exp\sum\limits_{j \geq 1}\ind{\eta^{\sigma_k}(X) - \eta^{\sigma_{k + 1}}(X) \leq 2^j\delta, K(X) = k}\Probdata\{2\norm{\eta(X) - \heta(X)}_\infty \geq 2^{j - 1}\delta\} \\
        \leq &\Prob\{0 < \eta^{\sigma_k}(X) - \eta^{\sigma_{k + 1}(X)} \leq \delta, K(X) = k\} \\
        &+\sum\limits_{j \geq 1}\Prob\{\eta^{\sigma_k}(X) - \eta^{\sigma_{k + 1}}(X) \leq 2^j\delta, K(X) = k\}C_2\exp(-C_3N^{\gamma}2^{2j - 2}\delta^2) \\
        \leq &C_1\delta^{\alpha_1} + \sum\limits_{j \geq 1}C_1C_2\delta^\alpha 2^{\alpha_1 j}\exp(-C_3N^{\gamma}2^{2j - 2}\delta^2)\enspace,
    \end{align*}
    let $N^{-\gamma/2}$, hence
    \begin{align*}
        U_1^k \leq C_1N^{-\gamma\alpha_1/2} + C_1C_2N^{-\gamma\alpha_1/2}\sum\limits_{j \geq 1}2^{\alpha j}\exp(-C_32^{2j - 2}) \leq  \tilde{C}_1 N^{-\gamma\alpha_1/2}\enspace.
    \end{align*}
    Therefore,
    \[
        U_1 \leq \tilde{C}\frac{S}{L}(L - \beta)L a_n^{-\alpha/2} = \tilde{C} S \frac{(L - \beta)}{L}N^{-\gamma\alpha_1/2}\enspace.
    \]
    For $U_2$ we can write
    \begin{align*}
        U_2 &= \frac{1}{L}\Exp\Big[\sum\limits_{k = \beta}^L\big[\sum\limits_{l=1}^k\eta^{\sigma_l}(X)\ind{\estimator^{\sigma_l}(X) = 0}\big]\ind{K(X) = k}\ind{2\norm{\eta(X) - \heta(X)}_\infty < \eta^{\sigma_k}(X) - \eta^{\sigma_{k + 1}(X)}}\Big]\\
        &=\frac{1}{L}\sum\limits_{k = \beta}^LU_2^k\enspace,
    \end{align*}
    where $U_2^k$ is given as
    \[
        U_2^k = \Exp\Big[\sum\limits_{l=1}^k\eta^{\sigma_l}(X)\ind{\estimator^{\sigma_l}(X) = 0}\ind{K(X) = k}\ind{2\norm{\eta(X) - \heta(X)}_\infty < \eta^{\sigma_k}(X) - \eta^{\sigma_{k + 1}(X)}}\Big]\enspace.
    \]
    For each $U_2^k$ we can write
    \begin{align*}
        U_2^k \leq \Exp\Big[\sum\limits_{l=1}^k\ind{\estimator^{\sigma_l}(X) = 0}\ind{K(X) = k}\ind{2\norm{\eta(X) - \heta(X)}_\infty < \eta^{\sigma_k}(X) - \eta^{\sigma_{k + 1}(X)}}\Big]\enspace.
    \end{align*}
    If $\estimator^{\sigma_l}(X) = 0$ for some $l \leq k$, hence by the definition of the plug-in rule we have
    \[
        \sum\limits_{j = 1}^{l'}(1 - \heta^{\hsigma_j}(X)) > \beta\enspace,
    \]
    where $l'$ is such that $\hsigma_{l'} = \sigma_l$.
    Additionally, on the event $\{2\norm{\eta(X) - \heta(X)}_\infty < \eta^{\sigma_k}(X) - \eta^{\sigma_{k + 1}(X)}\}$ according to Lemma~\ref{prop:partial_ordering_of_estimator} we have $\{\hsigma_1, \ldots, \hsigma_{l'}\} \subset \{\sigma_1, \ldots, \sigma_{k}\}$ and hence, on the event $\{K(X) = k\}$ we can write
    \[
        \underbrace{\sum\limits_{j = 1}^{l'}(1 - \eta^{\hsigma_j}(X))}_{\text{Sum of $l'$ elements}} \leq \underbrace{\sum\limits_{j = k - l' + 1}^{k}(1 - \eta^{\sigma_j}(X))}_{\text{Sum of the largest $l'$ elements}} \leq \beta\enspace.
    \]
    Therefore on the intersection of the three events $$\left\{\estimator^{\sigma_l}(X) = 0\right\}\cap\left\{2\norm{\eta(X) - \heta(X)}_\infty < \eta^{\sigma_k}(X) - \eta^{\sigma_{k + 1}(X)}\right\}\cap\left\{K(X) = k\right\}$$ we have
    \[
        l'\norm{\eta(X) - \heta(X)}_{\infty}\geq \abs{\sum\limits_{j = 1}^{l'}\heta^{\hsigma_j}(X) - \eta^{\hsigma_j}(X)} \geq |\sum\limits_{j = k - l' + 1}^{k}(1 - \eta^{\sigma_j}(X)) - \beta|\enspace.
    \]
    Hence,
    \begin{small}

    \begin{align*}
        U_2^k &\leq \Exp\Big[\sum\limits_{l=1}^k\ind{l\norm{\eta(X) - \heta(X)}_{\infty} \geq |\sum\limits_{j = k - l + 1}^{k}(1 - \eta^{\sigma_j}(X)) - \beta|}\ind{K(X) = k}\ind{2\norm{\eta(X) - \heta(X)}_\infty < \eta^{\sigma_k}(X) - \eta^{\sigma_{k + 1}}(X)}\Big]\\
        &\leq \Exp\Big[\sum\limits_{l=1}^k\ind{l\norm{\eta(X) - \heta(X)}_{\infty} \geq |\sum\limits_{j = k - l + 1}^{k}(1 - \eta^{\sigma_j}(X)) - \beta|}\ind{K(X) = k}\enspace.
    \end{align*}
    \end{small}
    where the first inequality is obtained by reordering thanks to Lemma~\ref{prop:partial_ordering_of_estimator}.
    With Assumption~\ref{ass:sparsity_struct}, we can show the following bound, using the same technique as for $U_1^k$:
    \[
        U_2^k \leq \tilde{C}\beta^{\alpha_2}N^{-\gamma\alpha_2/2}\sum\limits_{l =1}^L h(|k - l|)\enspace,
    \]
    therefore
    \[
        U_2 \leq \tilde{C}\beta^{\alpha_2}N^{-\gamma\alpha_{2}/2}\frac{1}{L}\sum\limits_{k = \beta}^L\sum\limits_{l =1}^L h(|k - l|) \leq \tilde{C}\beta^{\alpha_2}\frac{(L - \beta)}{L}N^{-\gamma\alpha_2/2}\enspace.
    \]
    Therefore, we have
    \begin{align*}
        \Exp\risk(\estimator) &\leq \risk(\oracle) + \tilde{C}\frac{(L - \beta)}{L}(\beta^{\alpha_2}N^{-\gamma\alpha_2/2} + SN^{-\gamma\alpha_1/2})\\ &\leq \risk(\oracle) + 2\tilde{C}(\beta^{\alpha_2} + S)\frac{(L - \beta)}{L}N^{-\gamma(\alpha_2 \wedge \alpha_1)/2}\enspace,
    \end{align*}
    and the conclusion holds.
\end{proof}

\end{document}